\numberwithin{equation}{section}
\newcommand{\N}{\mathbb{N}}         
\newcommand{\R}{\mathbb{R}}         
\newcommand{\PP}{\mathbb{P}}        
\newcommand{\supp}{\text{supp}}        
\newcommand{\e}{\varepsilon}
\newcommand{\wh}{\widehat}
\newcommand{\wt}{\widetilde}
\newcommand{\be}{\begin{equation}}
\newcommand{\ee}{\end{equation}}
\DeclareMathOperator{\hdim}{dim_H}
\DeclareMathOperator{\fdim}{dim_F}
\newtheorem{thm}{Theorem}[section]
\newtheorem{lemma}[thm]{Lemma}
\newtheorem{prop}[thm]{Proposition}
\newtheorem{cor}[thm]{Corollary}
\theoremstyle{remark}
\newtheorem{remark}[thm]{Remark}
\title[Self-similar measures: Fourier decay and dimension]{Self-similar measures: asymptotic bounds for the dimension and Fourier decay of smooth images}
\author{Carolina A. Mosquera}
\author{Pablo S. Shmerkin}
\address{
Departamento de Matem\'atica, Facultad de Ciencias Exactas y Naturales, Universidad de Buenos Aires \\
Ciudad Universitaria, Pabell\'{o}n I (C1428EGA) \\
 Ciudad de Buenos Aires, Argentina and IMAS-CONICET, Argentina}
\email{mosquera@dm.uba.ar}
\address{
        Department of Mathematics and Statistics\\
        Torcuato di Tella University and CONICET \\
        Av. Figueroa Alcorta 7350 (C1428BCW) \\
        Ciudad de Buenos Aires, Argentina}
 \email{pshmerkin@utdt.edu}
\thanks{CM and PS were supported by UBACyT 20020130100403BA, PICT 2014-1480 (ANPCyT) and PIP 11220150100355}
\subjclass[2010]{28A78, 28A80}
\keywords{Fourier decay, self-similar measures, correlation dimension}
\begin{document}

\maketitle

\begin{abstract}
R. Kaufman and M. Tsujii proved that the Fourier transform of self-similar measures has a power decay outside of a sparse set of frequencies. We present a version of this result for homogeneous self-similar measures, with quantitative estimates, and derive several applications: (1) non-linear smooth images of homogeneous self-similar measures have a power Fourier decay, (2) convolving with a homogeneous self-similar measure increases correlation dimension by a quantitative amount, (3) the dimension and Frostman exponent of (biased) Bernoulli convolutions tend to $1$ as the contraction ratio tends to $1$, at an explicit quantitative rate.
\end{abstract}

\section{Introduction and statement of results}

Given a finite Borel measure on $\R$, let
\[
\widehat{\mu}(\xi) = \int e^{2 \pi i \xi x}\,d\mu(x)
\]
be its Fourier transform. The decay properties of $\widehat{\mu}(\xi)$ as $|\xi|\to\infty$ give crucial ``arithmetic'' information about $\mu$. Indeed, define the \emph{Fourier dimension} of $\mu$ as
\begin{equation}
\label{eq:power-decay}
\fdim(\mu) =   2\sup\{  \sigma\ge 0: |\widehat{\mu}(\xi)| \le C_\sigma |\xi|^{-\sigma} \text{ for some $C_\sigma>0$ }\}.
\end{equation}
If $\fdim(\mu)>0$, then $\mu$-almost all points are normal to all integer bases (in fact, for this some appropriate logarithmic decay is enough; this follows from \cite{DavenportEtAl63}). As another example, if $\fdim(\mu)>0$ and additionally $\mu$ satisfies a Frostman condition $\mu(B(x,r))\le C\,r^t$, then $\mu$ satisfies a restriction theorem analog to the Stein-Tomas theorem for the sphere, see \cite{Mockenhaupt00}.

Despite its importance, it is notoriously difficult to calculate, and often even to estimate, the Fourier dimension of natural measures, such as measures arising from some dynamical system (it is often easier for random measures). See \cite{JordanSahlsten16, BourgainDyatlov17} for some deep recent results in this direction. Moreover, it may well happen that there is no decay at all. For example, it is well known that if $\mu$ is any measure supported on the middle-thirds Cantor set $C$,  then $\wh{\mu}(\xi)\not\to 0$ as $\xi\to\infty$; in particular, $\fdim(\mu)=0$. This applies, in particular, to the Cantor-Lebesgue measure $\nu$ on $C$. Despite this, in \cite{Kaufman84}, Kaufman proved that if $F$ is any $C^2$ diffeomorphism of $\R$ such that $F''>0$, then $\fdim(F\nu)>0$ where, here and below, we define
\[
F\mu(A) = \mu(F^{-1}A)\quad\text{for all Borel sets }A\subset\R.
\]
Let $\nu_a$ be the distribution of the random sum
\begin{equation} \label{eq:random-sum}
\sum_{n=1}^\infty \pm a^n,
\end{equation}
where the signs are IID with equal probabilities. This is the well-known family of Bernoulli convolutions, and the special case $a=1/3$ yields the Cantor-Lebesgue measure $\nu$. In fact, Kaufman proved his result for all $\nu_a$ with $a\in (0,1/2)$. Kaufman's paper appears to be little known, and is tersely written. One of the goals of this paper is to clarify, extend and quantify Kaufman's argument. We prove that if $\mu$ is  any non-atomic self-similar measure on $\R$ arising from a homogeneous iterated function system, then $\fdim(F\mu)>c(\mu)>0$ for any $C^2$ diffeomorphism of $\R$ with $F''>0$ (see Theorem \ref{thm:kaufman}). This holds even if the construction of $\mu$ involves overlaps and, in particular, extends to all Bernoulli convolutions $\nu_a$ with $a\in (0,1)$. Moreover, the value of $c(\mu)$ is effective (although certainly far from optimal) and depends continuously on the parameters defining $\mu$. For example, for the Cantor-Lebesgue measure $\nu$ we calculate that
\[
\fdim(F\nu) \ge 0.016 \,.
\]
See Corollary \ref{cor:Fmu}.

Even if $\fdim(\mu)=0$, it may happen that $\wh{\mu}(\xi)$ has fast decay outside of a very sparse set of frequencies. Indeed, Kaufman's result mentioned above depends on proving this for the measures $\nu_a$, $a\in (0,1/2)$. He did this by using a variant of what has become known as the Erd\H{o}s-Kahane argument (the original application of this method, by Erd\H{o}s \cite{Erdos40} and Kahane \cite{Kahane71}, was to show that $\fdim(\nu_a)>0$ for all $a$ outside of zero Hausdorff-dimensional set of exceptions). Recently, Tsujii \cite{Tsujii15} (using different arguments) proved a similar result for arbitrary self-similar measures on $\R$:
\begin{thm}[Kaufman, Tsujii]
Let $\mu$ be a self-similar measure on the real line which is not a single atom. Then, given $\e>0$, there exists $\delta>0$ such that for all sufficiently large $T$, the set
\[
\{ \xi\in [-T,T]: |\wh{\mu}(\xi)|\ge T^{-\delta} \}
\]
can be covered by $T^\e$ intervals of length $1$.
\end{thm}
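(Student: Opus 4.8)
The plan is to exploit the exact self-similarity of $\wh{\mu}$ together with an Erd\H{o}s--Kahane type counting argument. I will treat first the homogeneous case, where the underlying system has the form $f_i(x)=rx+t_i$ with a common ratio $r\in(0,1)$ and weights $p_i$; the general case is recovered either by grouping iterates or by replacing the product formula below with Tsujii's transfer-operator version. Writing $g(\xi)=\sum_i p_i e^{2\pi i t_i\xi}$ for the Fourier transform of the one-step measure $\sum_i p_i\delta_{t_i}$, self-similarity gives $\wh{\mu}(\xi)=\prod_{n\ge 0} g(r^n\xi)$, so that, setting $V(\eta)=-\log|g(\eta)|\ge 0$,
\[
-\log|\wh{\mu}(\xi)| \;=\; \sum_{n\ge 0} V(r^n\xi).
\]
Thus $|\wh{\mu}(\xi)|\ge T^{-\delta}$ is equivalent to the orbit $(r^n\xi)_n$ spending almost all of its $V$-weighted time extremely close to the resonance set $R=\{\eta:|g(\eta)|=1\}$, where all phases $t_i\eta$ align. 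A Taylor expansion of $g$ at a point of $R$ shows $V(\eta)\asymp\dist(\eta,R)^2$ near $R$, the implicit constant being a multiple of the variance of the digits $t_i$ (here the hypothesis that $\mu$ is not a single atom guarantees this variance is positive), while $V$ is bounded below away from $R$. The whole problem becomes one of counting, modulo unit scale, the $\xi$ whose scale-orbit stays near $R$.

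After reducing to a dyadic range $\xi\in[T/2,T]$ (there are only $O(\log T)$ of them, harmless for the final bound) and discarding the tail $r^n\xi<1$, whose contribution to the sum is bounded independently of $\xi$, only $N\asymp \log T/\log(1/r)$ scales remain. At each scale $n$ I record a digit $k_n$ locating $r^n\xi$ relative to $R$ together with the residual distance $\rho_n=\dist(r^n\xi,R)$; the identity $r^{n-1}\xi=(1/r)\,r^n\xi$ presents the passage to higher frequency as an \emph{expanding} map of the residuals. The heart of the matter, and the step I expect to be the main obstacle, is the estimate that because of this expansion the total potential collected along a maximal ``run'' staying near a single resonance branch is only $O(1)$: the residuals must shrink geometrically as one moves into a run, so $\sum_n V(r^n\xi)$ is comparable to the number of transitions between distinct resonance branches along the orbit.

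Granting this, the admissible $\xi$ (to unit precision) correspond to digit sequences of length $N$ with at most $\asymp\delta\log T = \alpha N$ transitions, where $\alpha=\alpha(\delta)\to 0$ as $\delta\to 0$. A standard entropy count bounds the number of such sequences by $\exp\big((H(\alpha)+O(\alpha))N\big)$, with $H$ the binary entropy; since $H(\alpha)+O(\alpha)\to 0$ as $\delta\to 0$ and $N\asymp\log T$, choosing $\delta$ small enough makes this at most $T^{\e}$. Re-summing the $O(\log T)$ dyadic ranges and slightly shrinking $\delta$ keeps the total number of unit intervals below $T^{\e}$, which is the assertion.

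The principal difficulty is upgrading this clean picture to full generality. When the differences $t_i-t_j$ are incommensurable the set $R$ degenerates to $\{0\}$ while $|g|$ can still approach $1$ along sparse frequencies; there the ``runs'' must be replaced by a finer bookkeeping showing that at each scale only boundedly many digits keep the defect affordable and that the scales offering genuine freedom are few. When the ratios $r_i$ differ there is no exact product formula, and one must instead iterate the transfer operator $\mathcal{T}\phi(\xi)=\sum_i p_i e^{2\pi i t_i\xi}\phi(r_i\xi)$, of which $\wh{\mu}$ is the fixed point, and control $\mathcal{T}^n$ off the resonant frequencies. Making the expansion estimate and the entropy count uniform and quantitative across all these cases, in particular keeping $\delta$ explicit and continuous in the defining parameters, is exactly what the homogeneous quantitative version developed in this paper is designed to supply.
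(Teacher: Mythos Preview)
The theorem you are attempting to prove is not actually proved in the paper: it is stated in the introduction as a known result of Kaufman and Tsujii, quoted for context. What the paper itself proves is Proposition~\ref{prop:EK0}, a quantitative version restricted to \emph{homogeneous} self-similar measures (common contraction ratio), with explicit dependence of $\delta$ on $\e$.

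Your sketch for the homogeneous case is essentially the same Erd\H{o}s--Kahane argument as Proposition~\ref{prop:EK0}, in different clothing. Where the paper writes $a^{-j}t=r_j+\e_j$ and observes that if both $|\e_j|$ and $|\e_{j+1}|$ are small then $r_{j+1}$ is uniquely determined by $r_j$, you speak of runs near the resonance set and transitions between branches; both routes lead to the same entropy count $\binom{N}{\tilde\e N}\cdot C^{\tilde\e N}\le e^{(h(\tilde\e)+O(\tilde\e))N}$. One imprecision worth flagging: your claim that ``the residuals must shrink geometrically as one moves into a run'' is not correct in general --- along a run one has $\rho_{n+1}=(rk_n-k_{n+1})+r\rho_n$, and the drift $rk_n-k_{n+1}$ need not vanish, so the $\rho_n$ can stay of roughly constant size. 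What is true, and all that is actually needed, is (i) each scale with $\rho_n$ bounded away from $0$ contributes at least a fixed positive amount to $\sum_n V(r^n\xi)$, so small total potential forces few bad scales, and (ii) along a run of good scales the integer sequence $(k_n)$ is determined, which is what keeps the covering count small. You only need the one-sided bound $\sum_n V(r^n\xi)\gtrsim(\text{number of bad scales})$, not the two-sided comparability you assert.

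For the inhomogeneous case you correctly identify the obstacles (no product formula, possibly incommensurable translations) and gesture toward Tsujii's transfer-operator method, but you do not actually carry out a proof there. Neither does the paper, so in that respect you are on equal footing; still, your proposal as written is a proof of the homogeneous case only, not of the stated theorem in full generality.
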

We note that Tsujii does not give any explicit estimates. In this work, we make the dependence of $\delta$ on $\e$ explicit for self-similar systems arising from \emph{homogeneous} iterated function systems.

Recall that $\nu_a$ denotes the Bernoulli convolution with parameter $a$. We consider also biased Bernoulli convolutions $\nu_a^p$, defined as the distribution of the random sum \eqref{eq:random-sum}, with the signs still IID, but now with $\PP(+)=p$, $\PP(-)=1-p$. It is generally believed that $\nu_a$, and indeed $\nu_a^p$ when $p$ is bounded away from $0$ and $1$, should become increasingly smooth as $a\uparrow 1$. In particular, one of the main conjectures in the field is that $\nu_a$  is absolutely continuous for all $a$ sufficiently close to $1$. Although this remains wide open, very recently new evidence has been established in this direction. P. Varju \cite{Varju16} proved that $\nu_a^p$ is absolutely continuous provided $a$ is algebraic and close enough to $1$ in terms of $p$ and its Mahler measure (in particular, Varju provided the first explicit examples of absolutely continuous biased Bernoulli convolutions). On the other hand, Shmerkin \cite{Shmerkin14} (building on work of Hochman \cite{Hochman14}) proved that $\nu_a^p$ is absolutely continuous for all $a\in (e^{-h(p)},1)$, outside of a set of possible exceptions of Hausdorff dimension $0$, which is independent of $p$. Here, and later,
\[
h(p)=-p\log p-(1-p)\log(1-p)
\]
is the entropy of the vector $(p,1-p)$. This was improved in \cite{Shmerkin16} by showing that, again outside of a zero-dimensional set of exceptions, $\nu_a^p$ has a density in $L^q$ for an optimal range of finite values of $q$. In this paper, we prove a weaker result which is nevertheless valid for \emph{all} $a$ close to $1$. To state the theorem, define the Frostman exponent or $L^\infty$-dimension of a measure $\mu$ on $\R$ as
\[
\dim_\infty(\mu) = \liminf_{r\downarrow 0} \frac{\log \sup_{x\in\supp(\mu)} \mu(B(x,r))}{\log r}.
\]
In other words, $\dim_\infty(\mu)$ is the supremum of all $s$ such that $\mu(B(x,r)) \le r^s$ for all $x$ and all sufficiently small $r$ (depending on $s$).
\begin{thm} \label{thm:dim-Bernoulli-intro}
For every $p_0\in (0,1/2)$ there is $C=C(p_0)>0$ such that
\[
 \inf_{p\in [p_0,1-p_0]}\dim_\infty(\nu_a^p) \ge 1 - C (1-a) \log(1/(1-a)).
\]
\end{thm}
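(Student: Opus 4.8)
The plan is to convert the statement into a power-law bound on the concentration function $\phi(r)=\sup_{x}\nu_a^p(B(x,r))$, since $\dim_\infty(\nu_a^p)\ge s$ is exactly the assertion that $\phi(r)\le C_s\,r^{s}$ for all small $r$. Writing $b=1-a$, so that $\log(1/a)\asymp b$, the target becomes $\phi(r)\le C\,r^{1-Cb\log(1/b)}$ uniformly in $p\in[p_0,1-p_0]$ (here, and below, $C$ denotes a generic constant depending only on $p_0$). Throughout I would exploit the convolution structure of $\nu_a^p$. For $t>0$ let $S_t$ be the pushforward under $x\mapsto tx$. Grouping digits in blocks of length $m$ gives $\nu_a^p=\rho_m\ast S_{a^{m}}\nu_a^p$, where $\rho_m$ is the law of $\sum_{i=1}^m\xi_i a^i$; iterating, $\nu_a^p$ is an infinite convolution of rescaled copies of $\rho_m$. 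A complementary identity is the odd/even ``doubling'' splitting $\nu_a^p=(p\delta_{a}+(1-p)\delta_{-a})\ast(S_a\nu_{a^2}^p)\ast\nu_{a^2}^p$, which after $j\asymp\log_2(1/b)$ steps expresses $\nu_a^p$ as a convolution of $O(2^{j})$ rescaled copies of $\nu_{a'}^p$ (and finitely many atoms) with $a'=a^{2^{j}}\in(1/4,1/2)$.

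The elementary inequality $\phi_{\mu\ast\lambda}(r)\le\min\{\phi_\mu(r),\phi_\lambda(r)\}$ already yields, via the doubling identity and the open-set-condition value $\dim_\infty(\nu_{a'}^p)=\log(1/\max(p,1-p))/\log(1/a')$ valid for $a'<1/2$, a lower bound $\dim_\infty(\nu_a^p)\ge c(p_0)>0$. This is, however, bounded away from $1$: once $p$ is bounded away from $1/2$ no single factor has $\dim_\infty$ near $1$, and a minimum over factors can never beat the best single factor. Hence the heart of the matter is a quantitative \emph{$L^\infty$ flattening}: convolving a measure $\mu$ with a suitably rescaled copy of $\nu_{a'}^p$ strictly decreases the maximal mass $\phi_\mu$ by a definite factor, unless $\mu$ is already spread essentially like Lebesgue at the scale in question. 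Feeding in the $\asymp 1/b$ blocks — equivalently the $\asymp\log_2(1/b)$ doubling factors — active at a given resolution and summing the per-step gains should push the attainable exponent from $c(p_0)$ up to $1-O(b\log(1/b))$, the factor $b\log(1/b)$ emerging when one balances the number of effective convolution factors per scale against the entropy they can inject before saturating dimension $1$; the optimal block length is $m\asymp\log(1/b)/b$.

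The main obstacle is precisely this $L^\infty$ flattening, which controls the \emph{worst-case} (min-entropy) concentration and is strictly stronger than the Shannon-entropy increase underlying the comparatively soft fact that $\hdim(\nu_a^p)=1$ for $a$ near $1$; note that the sparse-set Fourier decay of Kaufman and Tsujii is not directly usable here, as the exceptional frequencies are far too numerous to control an $L^\infty$ quantity. Concretely, after reducing to the discrete measures $\rho_N$ at the matched scale $a^{N}\asymp rb$, one must bound, for every window $W$ of length $\asymp a^{N}/b$ and every base point, the mass $\sum_{\omega\,:\,x_\omega\in W}p_\omega$, where $x_\omega=\sum_{i=1}^N\omega_i a^i$; the required estimate is that this worst-case mass exceeds the average window mass by at most a factor $e^{O(Nb^2\log(1/b))}$, i.e.\ that the positions $x_\omega$ cannot cluster beyond this threshold. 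Establishing such non-clustering uniformly in the base point and uniformly in $p\in[p_0,1-p_0]$ — which is where the exclusion of the endpoints $0,1$ enters — with every constant effective, is the crux; the remaining steps, namely the reduction to $\rho_N$, the passage from $\phi$ to $\dim_\infty$, and the optimization of $m$, are routine once the flattening estimate is in hand.
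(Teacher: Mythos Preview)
Your outline correctly identifies the convolution structure and the need for a flattening statement, but it leaves the actual flattening — the ``non-clustering'' estimate in your last paragraph — entirely unproved; that is not a routine step, and as written the proposal is not a proof but a reduction to an unestablished inequality. More importantly, you have misdiagnosed where the leverage lies. You dismiss the Kaufman--Tsujii sparse-set Fourier decay as ``not directly usable here, as the exceptional frequencies are far too numerous to control an $L^\infty$ quantity''. That is true for $L^\infty$, but the paper does not attack $\dim_\infty$ directly: it routes through $\dim_2$, where the Fourier picture is decisive.

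The paper's argument is as follows. Using the identity $\dim_2(\eta)=1-\limsup_{T\to\infty}\frac{\log\int_{|\xi|\le T}|\widehat{\eta}|^2\,d\xi}{\log T}$, one sees that the sparse exceptional set of frequencies contributes only $O(T^\delta)$ to the integral (it has small \emph{measure}, which is exactly what an $L^2$ quantity needs), while on its complement $|\widehat{\mu}|\le T^{-\e}$. This gives a clean quantitative $L^2$ flattening: if $\dim_2(\nu)\le 1-\kappa$ then $\dim_2(\mu\ast\nu)\ge\dim_2(\nu)+\sigma$ with $\sigma$ explicit in $\kappa$ (Theorem~\ref{thm:flattening}). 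Decomposing $\nu_a^p$ as a convolution of $N\asymp 1/(1-a)$ rescaled copies of $\nu_{a^N}^p$ with $a^N\in(1/4,1/2)$ and iterating, one obtains $\dim_2(\nu_a^p)\ge 1-C(1-a)\log(1/(1-a))$. The passage to $\dim_\infty$ is then a one-line application of Young's inequality: since $\nu_a^p=\nu_{a^2}^p\ast S_a\nu_{a^2}^p$, one has $\dim_\infty(\nu_a^p)\ge\dim_2(\nu_{a^2}^p)$ (Lemma~\ref{lem:young}). So the $L^\infty$ flattening you sought is obtained for free from $L^2$ flattening plus one extra self-convolution, and the entire argument is effective with the claimed rate. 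Your direct $L^\infty$ route, by contrast, would require controlling the worst window over all base points at every scale, which is genuinely harder and for which no tool in the paper is available.
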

We make some remarks on this statement:
\begin{enumerate}
\item It follows from Frostman's Lemma that the same lower bounds hold for other popular notions of dimension of a measure, including Hausdorff and correlation dimensions.
\item The value of $C(p_0)$ is effective in principle, so the result gives concrete lower bounds for Bernoulli convolutions for parameters close to $1$.
\item It is easy to obtain this bound for unbiased Bernoulli convolutions, by considering the smallest value of $n$ such that $(\nu_{a^n})$ satisfies the open set condition, and using that $\nu_a = \nu_{a^n}*\eta$ for a suitable measure $\eta$, the Frostman exponent does not decrease under convolution, and the value for $\dim_\infty(\nu_{a^n})$ is well known (this argument appears to be folklore). This elementary argument does not extend to the biased case. In the unbiased case, we are able to improve on the folklore argument, see Corollary \ref{cor:unbiased}.
\item The fact that $\dim_\infty(\nu_a^p)$ tends to $1$ as $a\uparrow 1$ (uniformly for $p\in [p_0,1-p_0]$) can also be deduced from the results of \cite{Shmerkin16}, but the resulting bounds (which would take some effort to make explicit) are in any case substantially worse.
\end{enumerate}

We deduce Theorem \ref{thm:dim-Bernoulli-intro} from another result asserting that convolving with a Bernoulli convolution increases correlation dimension by a quantitative amount; see Theorem \ref{thm:flattening} for details. (A non-quantitative version could also be deduced from \cite{Shmerkin16}.)

\section{Fourier decay outside of a small set of frequencies}

Let $\mu_{a,t}^p$ be the self-similar measure for the IFS $\{ a x+t_i\}_{i=1}^m$ with weights $p=(p_1,\ldots,p_m)$, where
$t=(t_1,\ldots,t_m)$, $a\in (0,1)$. In other words, $\mu_{a,t}^p$ is the distribution of the random sum $\sum_{n=1}^\infty X_n a^n$, where $X_n$ are IID random variables with $\mathbf{P}(X_n=t_i)=p_i$. We refer to such measures as \emph{homogeneous self-similar measures}. After an affine change of coordinates (which does not affect any of the properties we will consider), we can always assume that $t_1=0$ and $t_2=1$.

Recall that the convolution of two Borel probability measures $\mu,\nu$ on $\R$ is defined by the formula
\[
\mu*\nu(A) = (\mu\times\nu)\{ (x,y):x+y\in A \},
\]
and that the Fourier convolution formula $\widehat{\mu*\nu}=\widehat{\mu}\widehat{\nu}$ holds in this context. This extends to convolutions of countably many measures $\mu_n$ if $\sum_n X_n$ converges absolutely for $X_n\in\supp(\mu_n)$.

Recall that $\mu_{a,t}^p$ is an infinite convolution of discrete measures $\sum_{i=1}^m p_i\, \delta_{t_i a^n}$, so that its Fourier transform is given by
\[
\widehat{\mu}_{a,t}^p(u) = \prod_{n=1}^\infty \Phi(a^n u),
\]
where $\Phi(u) =  \Phi_{p,t}(u)= \sum_{j=1}^m p_j \exp(2\pi it_j u).$

\begin{lemma}\label{lem:Phi}
The following holds for all $y\in\R$ and $c\in(0,1)$: if $d(y,\mathbb{Z})>\frac{c}{2},$ then $|\Phi(y)|<1-\eta(c,p)$, where
\[
\eta(c,p)= p_1+ p_2-\sqrt{p_1^2+2p_1p_2\cos(\pi c)+p_2^2}.
\]
\end{lemma}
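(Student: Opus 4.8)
The plan is to isolate the first two terms of $\Phi$ and to bound everything else trivially by its total mass. Using the normalization $t_1=0$ and $t_2=1$, I write
\[
\Phi(y) = p_1 + p_2 e^{2\pi i y} + \sum_{j=3}^m p_j e^{2\pi i t_j y},
\]
so that the triangle inequality gives $|\Phi(y)| \le |p_1 + p_2 e^{2\pi i y}| + \sum_{j=3}^m p_j$. Since the weights sum to $1$, the tail contributes exactly $\sum_{j=3}^m p_j = 1 - p_1 - p_2$, and the whole problem reduces to bounding the single two-term exponential sum $|p_1 + p_2 e^{2\pi i y}|$. The point of this step is that the separation hypothesis involves only the difference $t_2 - t_1 = 1$, so the remaining atoms, whose positions $t_j$ are arbitrary, can only be controlled through their mass.

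For the two-term sum I compute directly
\[
|p_1 + p_2 e^{2\pi i y}|^2 = p_1^2 + 2 p_1 p_2 \cos(2\pi y) + p_2^2,
\]
so the only analytic input needed is an upper bound on $\cos(2\pi y)$. Writing $y = n + s$ with $n\in\mathbb{Z}$ and $|s|\le 1/2$, one has $d(y,\mathbb{Z}) = |s|$ and $\cos(2\pi y) = \cos(2\pi|s|)$ by evenness and periodicity. The hypothesis $d(y,\mathbb{Z}) > c/2$ then gives $2\pi|s| \in (\pi c, \pi]$, and since $\cos$ is strictly decreasing on $[0,\pi]$ with $\pi c \in (0,\pi)$ (using $c\in(0,1)$), I conclude $\cos(2\pi y) < \cos(\pi c)$.

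Substituting this into the formula for the two-term sum yields $|p_1 + p_2 e^{2\pi i y}| < \sqrt{p_1^2 + 2 p_1 p_2 \cos(\pi c) + p_2^2}$, and adding back the tail gives
\[
|\Phi(y)| < \sqrt{p_1^2 + 2 p_1 p_2 \cos(\pi c) + p_2^2} + (1 - p_1 - p_2) = 1 - \eta(c,p),
\]
which is exactly the claimed bound. I do not anticipate any serious obstacle: the entire content lies in choosing the decomposition that exploits the two distinguished translations $t_1=0,t_2=1$, and in observing that the monotonicity of cosine converts the additive separation $d(y,\mathbb{Z}) > c/2$ into the cosine inequality $\cos(2\pi y) < \cos(\pi c)$. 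The remaining atoms cost nothing beyond their total weight, which is precisely what produces the clean closed form for $\eta(c,p)$.
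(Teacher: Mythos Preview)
Your proof is correct and follows essentially the same approach as the paper: isolate the two-term sum $p_1+p_2 e^{2\pi i y}$ via the triangle inequality, compute its modulus as $\sqrt{p_1^2+2p_1p_2\cos(2\pi y)+p_2^2}$, and use monotonicity of cosine on $[0,\pi]$ to convert $d(y,\mathbb{Z})>c/2$ into $\cos(2\pi y)<\cos(\pi c)$. Your write-up is in fact slightly more detailed than the paper's in justifying the cosine inequality.
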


\begin{proof}
We have that
\begin{align*}
|\Phi(y)|&=|p_1+p_2 e^{2\pi iy}+\sum_{j=3}^m p_j e^{2\pi it_jy}|\\
&\le |p_1+p_2\cos(2\pi y)+i \,p_2\sin(2\pi y)| +\sum_{j=3}^m p_j\\
&= 1-p_1-p_2+\sqrt{p_1^2+2p_1p_2\cos(2\pi y)+p_2^2}.
\end{align*}
Using that $d(y,\mathbb{Z})>\frac{c}{2},$ we obtain that
$
\cos(2\pi y)<\cos(\pi c),
$
so we get the claim.
\end{proof}

\begin{remark}
In the special case of Bernoulli convolutions, $\Phi(u)=\cos(2\pi u)$ and $\eta(c,p)=1-\cos(\pi c).$
\end{remark}

Following Kaufman \cite{Kaufman84}, we use the Erd\H{o}s-Kahane argument to establish quantitative power decay outside of a sparse set of frequencies:
\begin{prop} \label{prop:EK0}
Given $a\in (0,1)$ and a probability vector $p=(p_1,\ldots,p_m)$ there is a constant $C=C_a>0$ such that the following holds: for each $\e>0$ small enough (depending continuously on $a$) the following holds for all $T$ large enough:  the set of frequencies
$u\in[-T,T]$ such that $|\widehat{\mu}_{a,t}^p(u)| \ge T^{-\e}$ can be covered by $C_a T^\delta$ intervals of length $1$, where $C_a>0$ depends only on $a$,
\begin{align}
\delta &= \frac{\log\left(\lceil 1+1/a\rceil\right)\tilde{\e}+h(\tilde{\e})}{\log(1/a)}, \label{eq:def-delta}\\
\tilde{\e} &= \frac{\log(a)}{\log(1-\eta(\tfrac{a}{a+1},p))}\, \e, \notag
\end{align}
and $h(\tilde{\e})=-\tilde{\e}\log(\tilde{\e})-(1-\tilde{\e})\log(1-\tilde{\e})$ is the entropy function. 
\end{prop}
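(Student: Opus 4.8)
The plan is to run the Erd\H{o}s--Kahane argument on the partial products of $\widehat{\mu}_{a,t}^p(u)=\prod_n\Phi(a^nu)$, turning the hypothesis $|\widehat{\mu}_{a,t}^p(u)|\ge T^{-\e}$ into the assertion that the orbit $(a^nu)_n$ stays close to $\Z$ at all but a controlled number of scales, and then counting the possible near-integer itineraries.

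First I would truncate. Set $N=\lfloor\log T/\log(1/a)\rfloor$, so that $a^Nu$ stays bounded for $u\in[-T,T]$; since every factor has modulus $\le 1$, we have $|\widehat{\mu}_{a,t}^p(u)|\le\prod_{n=1}^N|\Phi(a^nu)|$. Writing $\ell_n$ for the nearest integer to $a^nu$ and taking $c=a/(a+1)$, call an index $n\le N$ \emph{bad} if $d(a^nu,\Z)>c/2$, and let $K$ be the number of bad indices. Lemma \ref{lem:Phi} bounds each bad factor by $1-\eta(c,p)$, so $|\widehat{\mu}_{a,t}^p(u)|\ge T^{-\e}$ forces $(1-\eta(c,p))^K\ge T^{-\e}$; solving for $K$ and dividing by $N$ gives $K\le\tilde{\e}N$, up to lower-order terms absorbed into $C_a$, with exactly the $\tilde{\e}$ of the statement. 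The requirement that $\e$ be small (continuously in $a$) is what guarantees $\tilde{\e}\le 1/2$, needed below.

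Next I would encode each frequency by its itinerary $(\ell_1,\dots,\ell_N)$. The relation $au=\ell_1+\delta_1$ with $|\delta_1|\le 1/2$ shows that the set of $u$ producing a fixed itinerary has diameter $\le 1/a$, hence meets at most $\lceil 1+1/a\rceil$ unit intervals; so it suffices to bound the number of itineraries with at most $K=\lfloor\tilde{\e}N\rfloor$ bad indices. I would count these by reconstructing the itinerary from the bottom up via $\ell_{n-1}+\delta_{n-1}=a^{-1}(\ell_n+\delta_n)$. There are only $O(1/a)$ choices for the starting digit $\ell_N$ (as $a^Nu$ is bounded), and given $\ell_n$ the next digit $\ell_{n-1}$ ranges over at most $\lceil 1+1/a\rceil$ integers, because $a^{-1}\delta_n$ varies in an interval of length $a^{-1}$. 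The crucial point, and the reason for the exact threshold $c=a/(a+1)$, is that at a good index the admissible interval for $a^{-1}(\ell_n+\delta_n)$ has length $a^{-1}c=1/(a+1)<1$, and the identity $c(1+a^{-1})=1$ forces $\ell_{n-1}$ to a single value along runs of good indices; only the $K$ bad indices produce genuine branching. This yields a bound of the form $C_a\binom{N}{K}\lceil 1+1/a\rceil^{K}$ on the number of itineraries, and, combined with the per-itinerary factor $\lceil 1+1/a\rceil$ and the entropy estimate $\binom{N}{K}\le\exp(Nh(\tilde{\e}))$, it produces a cover by $C_aT^{\delta}$ unit intervals with $\delta$ as in \eqref{eq:def-delta}, after substituting $N\approx\log T/\log(1/a)$.

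The main obstacle is precisely this last counting step: making rigorous the dichotomy that good indices force the next digit while bad indices contribute at most $\lceil 1+1/a\rceil$ choices, and carefully bookkeeping the forced-versus-free transitions so that the branching is charged to exactly the $K$ bad indices with no spurious factors surviving in the exponent. Controlling the boundary and tail terms (the neglected factors with $n>N$, and the lower-order corrections in $K\le\tilde{\e}N$) is routine by comparison, and the continuity of the admissible range of $\e$ in $a$ follows by inspection of the formulas for $\tilde{\e}$ and $\eta(a/(a+1),p)$.
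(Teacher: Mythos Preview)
Your proposal is correct and follows essentially the same Erd\H{o}s--Kahane argument as the paper: truncate to $N\approx\log T/\log(1/a)$ factors, use Lemma~\ref{lem:Phi} to conclude that at most $\tilde{\e}N$ indices are ``bad'' (far from $\Z$), then count integer itineraries by observing that along runs of good indices the next digit is forced while elsewhere there are at most $\lceil1+1/a\rceil$ choices, and multiply by the binomial count $\binom{N}{\lfloor\tilde{\e}N\rfloor}\le e^{h(\tilde{\e})N}$ for the location of the bad set. The only cosmetic difference is that the paper rescales to $t=a^{N}u\in[0,1]$ and reconstructs the sequence $r_j=\lfloor a^{-j}t\rceil$ forward from $r_0$, whereas you reconstruct $\ell_n$ backward from $\ell_N$; the arithmetic is identical.

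On the point you single out as the main obstacle: the transition $\ell_n\mapsto\ell_{n-1}$ is forced only when \emph{both} $n$ and $n-1$ are good (your phrase ``along runs of good indices'' gets this right, but the subsequent claim that branching is charged to exactly the $K$ bad indices does not), so each bad index spoils up to two transitions. The paper's proof accordingly produces $(\lceil1+1/a\rceil)^{2\tilde{\e}N+1}$ itineraries per bad set, i.e.\ a $2\tilde{\e}$ rather than $\tilde{\e}$ in the first term of the numerator of $\delta$; the stated formula \eqref{eq:def-delta} and the bound actually established in the paper differ by this harmless factor of~$2$, so your concern about ``spurious factors surviving in the exponent'' is well placed.
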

\begin{proof}
Choosing $N\in\mathbb{N}$ such that $a^{1-N}\le T<a^{-N}$ we may assume that $T=a^{-N}$.

First we consider $u$ such that $0\le u\le a^{-N}.$ Then we can write $u=t a^{-N}$ with $t\in[0,1].$

We have that
\begin{align*}
|\widehat{\mu}_{a,t}^p(u)| &\le |\prod_{j=1}^\infty \Phi(a^j u)|\\
&= |\prod_{j=1}^\infty \Phi(a^j a^{-N}t)|\\
&\le \prod_{j=1}^N |\Phi(a^{j-N}t)|\\
&=\prod_{j=0}^{N-1} |\Phi(a^{-j}t)|.
\end{align*}

We denote the distance of $y\in\R$ to the closest integer by $\|y\|$. Given $\e>0$, we let $\wt{\e}$ be as in the statement. Let
\[
S(N, \tilde{\varepsilon}):=\left\{t\in[0,1]\colon \|a^{-j} t\|< \xi \mbox{ for at least } (1-\tilde{\e})N \mbox{ integers } j\in[N]\right\},
\]
where we denote $[N]=\{0,1,\ldots,N-1\}$, and $\xi=\xi(a)=\tfrac{a}{2(a+1)}$.

We observe that if $t\notin S(N, \tilde{\varepsilon})$ then, by Lemma \ref{lem:Phi},
\[
|\widehat{\mu}_{a,t}^p(u)|\le (1-\eta(2\xi,p))^{\tilde{\varepsilon} N}=  a^{N\e} = T^{-\e},
\]
using the definition of $\wt{\e}$. We deduce that
\begin{equation} \label{eq:decay-on-SN}
\{t\in[0, 1]\colon |\widehat{\mu}_{a,t}^p(ta^{-N})|\ge T^{-\e}\}\subseteq S(N, \tilde{\varepsilon}).
\end{equation}
Hence, it order to prove that $\{u\in[0, T]\colon |\widehat{\mu}_{a,t}^p(u)|\ge T^{-\e}\}$ can be covered by a small number of intervals of length
1, we will estimate the amount and length of intervals needed to cover $S(N, \tilde{\varepsilon})$.

For each $t\in [0,1]$, we define integers $r_j(t)$ and $\e_j(t)\in [-1/2, 1/2)$ such that
\[
a^{-j}t= r_j(t)+\e_j(t)
\]
so that $t\in S(N, \tilde{\varepsilon})$ precisely when $|\e_j(t)|<\xi$ at least $(1-\tilde{\e})N$ times among indices $j\in[N]$.
We will simply write $r_j$ and $\e_j$ when no confusion arises.

Let $N_1=\lceil (1-\tilde{\e}) N\rceil$. For each $t\in S(N, \tilde{\varepsilon})$, there is a subset $I\subset [N]$ with at least $N_1$ elements such that $|\e_j|<\xi$ for all $j\in I$. We will estimate the size of $S(N,\tilde{\e})$ by considering each index set $I$ separately, and for this we define 
\[
S(I,\tilde{\e}) = \{t\in[0,1]\colon \|a^{-j} t\|< \xi \text{ for all } j\in I\}.
\]

We have that $t=r_0+\varepsilon_0$, so for $t\in[0,1]$ there are at most $2$ choices for $r_0$. On the other hand, since $r_{j+1}+\e_{j+1}=a^{-1}(r_j+\e_j)$ for $j\ge 0$, we have that
\[
|r_{j+1}-a^{-1} r_j|\le |\e_{j+1}|+a^{-1}|\e_j|.
\]
Using that $-1/2\le \e_j, \e_{j+1}<1/2,$ we obtain that each value of $r_j$ can be followed by at most $\lceil 1+1/a\rceil$ choices of $r_{j+1}.$

If $j,j+1\in I$, then $|\e_{j+1}|,|\e_j|<\tfrac{a}{2(a+1)}$ so that $|\e_{j+1}|+\frac{1}{a}|\e_j|<1/2$, and at
most one value of $r_{j+1}$ is possible. Note that
\[
|\{ j\in [N]:j,j+1\in I\}| \ge N-2|N\setminus N_1|-1 \ge (1-2\widetilde{\e})N-1.
\]
Thus, the total number of sequences $r_1, \dots, r_N$ can be bounded by
\[
M_N := \left(\lceil 1+1/a\rceil\right)^{2\tilde{\e}N+1}.
\]
In particular, $r_N$ can take at most $M_N$ values. Since
\[
t\in (a^N r_N-a^N/2, a^N r_n+a^N/2),
\]
we obtain that $S(I,\tilde{\e})$ can be covered by $M_N$ intervals of length $a^N$.

Now, estimating $\binom{N}{N_1}$ by Stirling's formula, we see that the number of index sets $I$ is at most $\exp(h(\tilde{\e}) N)$ for large enough $N$.

We conclude that $S(N, \tilde{\varepsilon})$ can be covered by
\[
 \left(\lceil 1+1/a\rceil\right)^{2\tilde{\e}N+1} e^{h(\tilde{\e})N}
\]
intervals of length $a^N$. Recalling \eqref{eq:decay-on-SN} and rescaling, we see that $\{ u\in [0,T]: |\widehat{\mu}_{a,t}^p(u)|> T^{-\e}\}$ can be covered by the claimed number of intervals of length $1$. The situation for $u\in [-T,0]$ is completely analogous, so this finishes the proof.

\end{proof}

\section{A generalization of a theorem of Kaufman}

We now apply Proposition \ref{prop:EK0} to  obtain a variant of a result of Kaufman \cite{Kaufman84}:
\begin{thm}\label{thm:kaufman}
Let $F\in C^2(\R)$ such that $F^{\prime\prime}>0$ and let $\mu=\mu_{a,t}^p$ be a (homogeneous) self-similar measure on $\R$ which is not a single atom. Then there exist $\sigma=\sigma(\mu)>0$ (independent of $F$) and $C=C(F,\mu)>0$ such that
\[
|\widehat{F\mu}(u)| \le  C |u|^{-\sigma}.
\]
\end{thm}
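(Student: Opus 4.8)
The plan is to run the classical Kaufman linearization, with Proposition~\ref{prop:EK0} as the key input. Assume $u>0$ is large (the case $u<0$ follows since $F\mu$ is a real measure). Let $K=\supp(\mu)$, a fixed compact interval, and decompose $\mu=\nu_N*\mu_N$, where $\nu_N$ is the distribution of the truncated sum $\sum_{n=1}^N X_n a^n$ (a finitely supported measure) and $\mu_N$ is the push-forward of $\mu$ under $x\mapsto a^N x$, supported in an interval of length $a^N|K|$. I would choose $N=N(u)$ so that $a^N\asymp u^{-\beta}$ for a parameter $\beta\in(1/2,1)$ to be fixed at the end. Writing a $\mu$-point as $y+a^N w$ with $y\sim\nu_N$, $w\sim\mu$, and Taylor expanding $F(y+a^N w)=F(y)+F'(y)a^N w+R(y,w)$ with $|R(y,w)|\le\tfrac12\|F''\|_{\infty,K}a^{2N}|w|^2$, one obtains
\[
\widehat{F\mu}(u)=\int e^{2\pi i u F(y)}\,\widehat{\mu}\big(u\,F'(y)\,a^N\big)\,d\nu_N(y)+E,
\]
where $|E|\le 2\pi u\sup|R|\lesssim u\,a^{2N}\asymp u^{1-2\beta}$, which decays precisely because $\beta>1/2$.

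The main term is governed by the values of $\widehat\mu$ at the frequency $\zeta(y)=u\,a^N F'(y)$. Set $T=u\,a^N\max_K|F'|\asymp u^{1-\beta}$, so that $\zeta(y)\in[-T,T]$. By Proposition~\ref{prop:EK0}, the bad set $B=\{\zeta\in[-T,T]:|\widehat\mu(\zeta)|\ge T^{-\e}\}$ can be covered by $C_a T^{\delta}$ unit intervals, with $\delta=\delta(\e,a)$ as in \eqref{eq:def-delta}. For $y$ with $\zeta(y)\notin B$ we have $|\widehat\mu(\zeta(y))|\le T^{-\e}$, so such $y$ contribute at most $T^{-\e}$ to the main term.

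The crux is bounding the $\nu_N$-mass of $\{y:\zeta(y)\in B\}$, and this is where $F''>0$ is indispensable: since $\zeta'(y)=u\,a^N F''(y)$ has constant sign and $|\zeta'(y)|\ge u\,a^N\min_K F''\gtrsim T$, the map $\zeta$ is a strictly monotone $C^1$ diffeomorphism of $K$, and the $\zeta$-preimage of each unit interval is an interval of length $\lesssim 1/T$. Hence $\{y:\zeta(y)\in B\}$ is covered by $C_a T^{\delta}$ intervals of length $\lesssim 1/T$. To control $\nu_N$ on such intervals I would first reduce to $\mu$: from $\mu=\nu_N*\mu_N$ and the fact that $\mu_N$ is supported in an interval of length $a^N|K|\le 1/T$ (which holds because $\beta\ge 1/2$), one gets $\nu_N(I)\le\mu(I')$ for an enlarged interval with $|I'|\le 2|I|$. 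Invoking the positive Frostman exponent of $\mu$, that is $\mu(B(x,r))\le C r^{s}$ with $s=s(\mu)>0$ (a property of non-atomic homogeneous self-similar measures), yields
\[
\nu_N\{y:\zeta(y)\in B\}\ \lesssim\ C_a\,T^{\delta}\,(1/T)^{s}=C_a\,T^{\delta-s}.
\]
Since $\delta(\e,a)\to 0$ as $\e\to 0$, one fixes $\e$ small (depending on $\mu$) so that $\delta<s$, whence this term decays like $T^{-(s-\delta)}$.

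Collecting the three contributions gives
\[
|\widehat{F\mu}(u)|\ \lesssim\ u^{-\e(1-\beta)}+u^{-(s-\delta)(1-\beta)}+u^{-(2\beta-1)},
\]
and any $\beta\in(1/2,1)$ makes all three exponents negative, so the theorem follows with $\sigma=\sigma(\mu)$ equal to their minimum and $C=C(F,\mu)$ absorbing $\|F''\|_{\infty,K}$, $\min_K F''$, $\max_K|F'|$ and $|K|$; note that $\sigma$ is manifestly independent of $F$, as required. I expect the main obstacle to be the step that pulls the sparse frequency set $B$ back to a $\nu_N$-small set in $y$: it must combine the diffeomorphism property coming from $F''>0$ with a \emph{quantitative} non-concentration estimate for $\nu_N$. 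Establishing the latter, equivalently the positivity of the Frostman exponent $s(\mu)$ for self-similar measures that may exhibit overlaps, is the key structural input; the remaining steps (controlling the Taylor remainder and optimizing $N$, i.e.\ $\beta$) are routine.
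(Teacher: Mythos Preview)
Your proposal is correct and follows essentially the same approach as the paper: decompose $\mu$ into a finite approximation and a scaled copy, linearize $F$ via Taylor's theorem, apply Proposition~\ref{prop:EK0} to the resulting Fourier values, pull back the sparse bad frequency set using $F''>0$, and control the $\nu_N$-mass via the Frostman bound of Proposition~\ref{prop}. The only difference is that the paper simply fixes $\beta=2/3$ (so $a^N\approx u^{-2/3}$, $T\approx u^{1/3}$) rather than leaving $\beta\in(1/2,1)$ as a free parameter.
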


We underline that the value of $\sigma$ is effective. For the proof we need the following well known result, see \cite[Proposition 2.2]{FengLau09}.
\begin{prop}\label{prop}
Let $\mu$ be a self-similar measure on the real line which is not a single atom. Then there exist $C=C(\mu)>0$ and $s=s(\mu)>0$ such that $\mu(B(x,r))\le C\, r^s$ $\forall x, \forall r>0$.
\end{prop}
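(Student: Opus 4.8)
The plan is to prove the equivalent statement that the concentration function $M(r):=\sup_{x\in\R}\mu(B(x,r))$ obeys $M(r)\le C r^{s}$, since this is exactly the asserted Frostman bound. Rather than estimating $\mu$ of a fixed ball directly, I would exploit the self-similar structure \emph{multiplicatively}. Writing the defining IFS as $\{f_i(x)=a_ix+t_i\}_{i=1}^m$ with weights $p_i$ and iterating the identity $\mu=\sum_i p_i\,f_i\mu$ gives, for every $n$, the decomposition $\mu=\sum_{|\mathbf i|=n}p_{\mathbf i}\,f_{\mathbf i}\mu$ over words $\mathbf i=(i_1,\dots,i_n)$, where $f_{\mathbf i}=f_{i_1}\circ\cdots\circ f_{i_n}$, $p_{\mathbf i}=p_{i_1}\cdots p_{i_n}$, and $f_{\mathbf i}\mu$ lives on the cylinder $K_{\mathbf i}=f_{\mathbf i}(K)$, $K:=\supp\mu$. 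I will first treat the homogeneous case $a_i\equiv a$ relevant here, so that $\diam K_{\mathbf i}=a^{n}\diam K$, and indicate the changes for general ratios at the end.

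The object I would track is the total weight of generation-$n$ cylinders clustering near a point,
\[
g(n):=\sup_{x\in\R}\ \sum_{\mathbf i:\ |\mathbf i|=n,\ K_{\mathbf i}\cap B(x,a^{n}\diam K)\neq\emptyset}p_{\mathbf i}.
\]
Because $f_{\mathbf i}\mu(B(x,r))\le \mathbf 1[\,K_{\mathbf i}\cap B(x,r)\neq\emptyset\,]$, the decomposition gives $\mu(B(x,a^{n}\diam K))\le g(n)$, so it suffices to show that $g(n)$ decays geometrically. The crucial step is a submultiplicativity estimate $g(n+m)\le C_0\,g(n)\,g(m)$ with $C_0=O(1)$. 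To obtain it I would split a word of length $n+m$ as $\mathbf i=\mathbf j\mathbf k$ with $|\mathbf j|=n$, $|\mathbf k|=m$, use $p_{\mathbf i}=p_{\mathbf j}p_{\mathbf k}$ and $K_{\mathbf j\mathbf k}=f_{\mathbf j}(K_{\mathbf k})$, and note that for a fixed prefix $\mathbf j$ the requirement $K_{\mathbf j\mathbf k}\cap B(x,a^{n+m}\diam K)\neq\emptyset$ becomes, after applying $f_{\mathbf j}^{-1}$ (which rescales distances by $a^{-n}$), the requirement $K_{\mathbf k}\cap B(f_{\mathbf j}^{-1}x,a^{m}\diam K)\neq\emptyset$; hence the inner sum over admissible suffixes is at most $g(m)$, while the outer sum runs over prefixes whose cylinder lies within $O(a^{n}\diam K)$ of $x$ and contributes at most $C_0\,g(n)$ after covering a slightly enlarged ball by boundedly many balls of radius $a^{n}\diam K$.

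With submultiplicativity in hand, $n\mapsto\log(C_0 g(n))$ is subadditive, and Fekete's lemma gives $g(n)\le C\gamma^{n}$ with $\gamma=\inf_n(C_0g(n))^{1/n}$, provided $g(n_0)<1/C_0$ for a single $n_0$. This last input is where non-atomicity enters: a self-similar measure is either a single atom or purely non-atomic (a standard dichotomy; in the homogeneous case, if $\mu$ had an atom of maximal mass $c>0$ then self-similarity forces $f_i^{-1}w_0$ to be an atom of mass $c$ for every $i$, hence at least two distinct mass-$c$ atoms since $t_1\neq t_2$, and iterating produces more mass-$c$ atoms than the total mass permits, so $c=0$). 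Non-atomicity together with upper semicontinuity on the compact set $K$ yields $M(r)\to 0$ as $r\downarrow 0$, and since $g(n)\le M(Ca^{n}\diam K)$ we get $g(n)\to 0$, hence $g(n_0)<1/C_0$ eventually. Thus $\mu(B(x,a^{n}\diam K))\le C\gamma^{n}=C(a^{n})^{s}$ with $s=\log(1/\gamma)/\log(1/a)>0$, and choosing $n$ with $a^{n}\diam K\asymp r$ upgrades this to $\mu(B(x,r))\le C'r^{s}$ for all $r$, with effective constants. For general (non-homogeneous) ratios $\rho_i=|a_i|$ I would run the identical scheme with the stopping-time families $\Lambda_r=\{\mathbf i:\rho_{\mathbf i}\diam K\le r<\rho_{\mathbf i^-}\diam K\}$ replacing generation-$n$ words; these partition the symbolic space and satisfy $\diam K_{\mathbf i}\asymp r$, and the estimate becomes $g(r_1r_2)\lesssim g(r_1)g(r_2)$, again producing a power bound.

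The step I expect to be the main obstacle is precisely the submultiplicativity in the presence of overlaps. The temptation is to estimate the concentration function by a single-scale inequality of Kolmogorov--Rogozin type, but this only yields logarithmic decay, because at scale $r$ only $\asymp\log(1/r)$ of the independent digits of $\sum_n X_n a^n$ are genuinely spread out at that scale. The point of working with $g(n)$ is that it is the genuinely multiplicative quantity: its decay is driven by the self-similar rescaling and is completely insensitive to how many cylinders pile up near a point, since we track the conformally invariant weights $p_{\mathbf i}$ rather than their cardinality. Verifying the harmless constant $C_0$ and the exact equivalence of the clustering conditions under $f_{\mathbf j}^{-1}$ is the technical heart; the non-atomic dichotomy and Fekete's lemma are soft.
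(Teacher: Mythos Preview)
The paper does not prove this proposition; it quotes it as a well-known fact with a reference to \cite[Proposition~2.2]{FengLau09}. Your proposal is correct and supplies exactly such a proof. The scheme you set up --- tracking the maximal total cylinder weight $g(n)=\sup_x\sum_{|\mathbf i|=n,\,K_{\mathbf i}\cap B(x,a^n\diam K)\neq\emptyset}p_{\mathbf i}$, proving submultiplicativity, and then combining Fekete's lemma with the atomic/non-atomic dichotomy to force $g(n_0)<1/C_0$ and hence geometric decay --- is essentially the standard argument and matches what Feng--Lau do.

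Two minor remarks. First, in the homogeneous case you in fact get $C_0=1$: if $K_{\mathbf j\mathbf k}\cap B(x,a^{n+m}\diam K)\neq\emptyset$ then already $K_{\mathbf j}\supset K_{\mathbf j\mathbf k}$ meets $B(x,a^{n+m}\diam K)\subset B(x,a^{n}\diam K)$, so no enlargement of the ball is needed for the outer sum. Second, the cleanest way to finish the dichotomy is to observe that the (finite, nonempty) set $A$ of atoms of maximal mass satisfies $f_i^{-1}(A)\subset A$, hence $f_i(A)=A$ by injectivity and finiteness; then $\diam A=a\,\diam A$ forces $A$ to be a single common fixed point of all $f_i$, so $K=A$ and $\mu$ is a Dirac mass.
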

We also recall that, if the underlying iterated function system satisfies the open set condition, then one can take
\[
s=  \min_{i=1}^m \frac{\log p_i}{\log a}.
\]
In particular, if $p_i=1/m$ for all $i$, then $s=\log m/\log(1/a)$.

\begin{proof}[Proof of Theorem \ref{thm:kaufman}]
For simplicity we only consider the case $u>0$, with the case $u<0$ being exactly analogous. Throughout the course of the proof, $C$ denotes a positive constant that is allowed to depend only on $a,p,t$ and $F$, and may change from line to line. By $x\approx y$ we mean that $C^{-1} x \le y \le C x$.

Fix $u\gg 1$. Then there exists $N=N(u)\in\N$ such that
\begin{equation}\label{eq-1}
1< a^N u^{2/3}\le a^{-1}.
\end{equation}
In other words, $N = \lfloor\frac{\log(u^{-2/3})}{\log(a)}\rfloor$.

We can write
\[
\mu = \mu_N * \lambda_N,
\]
where $\mu_N = *_{n=1}^N (\sum_{i=1}^m p_i \delta_{a^n t_i})$ is the step $n$ discrete approximation to $\mu$, and $\lambda_N$ is a copy of $\mu$ scaled down by a factor $a^N$. Then

\begin{equation}\label{conv}
\widehat{\mu}(u)=  \widehat{\mu_N}(u) \widehat{\lambda_N}(u)= \prod_{n=1}^{N}\Phi(a^n u) \prod_{n=N+1}^{\infty}\Phi(a^n u).
\end{equation}

Since $F$ is $C^2$, we have that $F(x+y)= F(x)+F^{\prime}(x)y+O(y^2)$. Then
\[
uF(x+y)= uF(x)+uF^{\prime}(x)y+O(uy^2).
\]
Write $e(x)=e^{-2\pi i x}$ for simplicity. Using that $|e(\delta)-1|=O(\delta)$, we estimate
\begin{align*}
\widehat{F\mu}(u) &= \int e^{-2\pi i uF}\, d\mu\\
&= \iint e(uF(x+y))\, d\mu_N(x) d\lambda_N(y)\nonumber\\
&= \iint e(uF(x)+uF^{\prime}(x)y)(1+O(u y^2))\, d\mu_N(x) d\lambda_N(y)\\
&= \int e(uF(x)) \left( \int e(uF^{\prime}(x)y)  d\lambda_N(y) \right)d\mu_N(x) +O(u(a^N)^2).
\end{align*}
Since that $a^N\approx u^{-2/3},$ we see that $u (a^N)^{2}=O(u^{-1/3})$. Using that $|\widehat{\lambda_N}(\xi)|=|\widehat{\mu}(a^N\xi)|$ for all $\xi$, we obtain
\begin{align*}
|\widehat{F\mu}(u)| &\le \left| \int e(uF(x)) \left( \int e(uF^{\prime}(x)y)  d\lambda_N(y) \right)d\mu_N(x) \right| + O(u^{-1/3})\\
&\le \int |\widehat{\lambda_N}(uF^{\prime}(x))|\, d\mu_N(x) +O(u^{-1/3})\\
&= \int |\widehat{\mu}(ua^NF^{\prime}(x))|\, d\mu_N(x) +O(u^{-1/3}).
\end{align*}

Let $A:=\sup_{x\in \supp(\mu)} |F^{\prime}(x)|$ and $T=Aa^Nu>0$, and note that $T\approx u^{1/3}$. Fix $\e>0$ to be determined later. Then, by Proposition \ref{prop:EK0}, there is $C=C(\mu)>0$ such that for large enough $u$, the set of frequencies $|\xi|\in[0,T]$ such that $|\widehat{\mu}(\xi)| \ge T^{-\e}$ can be covered by $CT^\delta$ intervals of length $1$, where
$\delta= \frac{\log\left(\lceil 1+1/a\rceil\right)\tilde{\e}+h(\tilde{\e})}{\log(1/a)},$ and
$\tilde{\e}= \frac{\log(a)}{\log(1-\eta(\tfrac{a}{a+1},p))}\, \e$.  Let $I_1,\dots, I_{CT^{\delta}}$ be these intervals. Then $|\widehat{\mu}(\xi)|\le T^{-\e}$ for all $\xi\notin\cup_{j=1}^{CT^{\delta}}I_j$.

We consider
\[
\Gamma:=\left\{x\colon a^NuF^{\prime}(x)\in\bigcup_{j=1}^{CT^{\delta}}I_j\right\}.
\]
Thus
\[
\int |\widehat{\mu}(ua^NF^{\prime}(x))|\, d\mu_N(x)=  \int_{\Gamma}
+ \int_{\Gamma^c}\le\mu_N(\Gamma)+ T^{-\e} \le \mu_N(\Gamma) + O(u^{-\e/3}).
\]
To conclude we will prove that there exists $\gamma>0$ such that $\mu_N(\Gamma)\le u^{-\gamma}$. Since $\mu=\mu_N*\lambda_N$, and the support of $\lambda_N$ is contained in an interval $[-C a^N, C a^N]$ for some $C=C(a,t)$, we see that for any interval $I=(b_1,b_2)$,
\begin{equation}\label{prop-N}
\mu_N(I) \le \mu(b_1-C a^N,b_2+C a^N).
\end{equation}
On the other hand, $\Gamma=\cup_{j=1}^{CT^{\delta}}(F^\prime)^{-1} J_j$, where $J_j$ are intervals of length $|J_j|=(a^Nu)^{-1}\approx a^{N/2}$. By our assumption that $F''>0$, we see that $\Gamma$ is covered by $C T^\delta$ intervals $J'_j$ of length $C a^{N/2}$.
We observe that the unique constant $C$ which depends on $F$ is the last one, that is, the one which modifies the lengths of the
intervals. Using \eqref{prop-N} we get $\mu_N(J'_j)\le C a^{\frac{Ns}2}$, and then
\[
\mu_N(\Gamma)\le C T^{\delta}  a^{\frac{Ns}2}\le C  u^{\frac{(\delta-s)}{3}}.
\]
At this point we assume that $\e$ was taken small enough that $\delta<s(\mu)$, and conclude that
\[
|\widehat{F\mu}(u)| \le C u^{\frac{(\delta-s)}{3}}+ C u^{-\e/3}+  C u^{-1/3} \le  C u^{-\min\left\{\frac{(s-\delta)}{3}, \frac{\e}{3}\right\}}.
\]
\end{proof}

As an example, we obtain that if $\mu$ is the Cantor-Lebesgue measure on the middle-thirds Cantor set, then even though $\widehat{\mu}(u)$ does not decay as $u\to\infty$, for $\widehat{F\mu}$ we have a uniform explicit decay:

\begin{cor} \label{cor:Fmu}
Let $\mu$ be the Cantor-Lebesgue measure. Then for every $C^2$ function $F:\R\to\R$ such that $F''>0$ there exists a constant $C_F>0$ such that
\[
|\widehat{F\mu}(u)| \le C_F |u|^{-\sigma}
\]
Moroever, one can take $\sigma=0.016$.
\end{cor}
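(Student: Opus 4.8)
The plan is to treat Corollary~\ref{cor:Fmu} as an explicit instance of Theorem~\ref{thm:kaufman}. The Cantor--Lebesgue measure is exactly $\mu_{a,t}^p$ with $a=1/3$, uniform weights $p=(1/2,1/2)$ and (after the normalization $t_1=0$, $t_2=1$) two maps. Inspecting the proof of Theorem~\ref{thm:kaufman}, the decay exponent it produces is $\sigma=\min\{(s-\delta)/3,\,\e/3\}$, valid for every $\e$ small enough that $\delta<s$, with $s$ as in Proposition~\ref{prop} and $\delta$, $\wt\e$ as in Proposition~\ref{prop:EK0}. The $F$-dependence lives entirely in the constant (the theorem already gives $\sigma=\sigma(\mu)$ independent of $F$), so the whole task reduces to inserting the present parameters and choosing $\e$ to make this minimum as large as possible.

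First I would record the ingredients. Since $p=(1/2,1/2)$ we are in the Bernoulli-convolution case, so $\eta(c,p)=1-\cos(\pi c)$, and the middle-thirds IFS satisfies the open set condition, whence $s=\log 2/\log 3\approx 0.6309$. Evaluating the quantities of Proposition~\ref{prop:EK0} at $a=1/3$ gives $\lceil 1+1/a\rceil=4$ and
\[
1-\eta\!\left(\tfrac{a}{a+1},p\right)=1-\eta\!\left(\tfrac14,p\right)=\cos(\pi/4)=\tfrac{1}{\sqrt2},
\]
so that $\wt\e=\frac{\log(1/3)}{\log(1/\sqrt2)}\,\e=\frac{2\log 3}{\log 2}\,\e$ and $\delta=\dfrac{(\log 4)\,\wt\e+h(\wt\e)}{\log 3}$.

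Next comes the optimization. As $\e$ grows, $\wt\e$ and hence $\delta$ grow, so $(s-\delta)/3$ decreases while $\e/3$ increases; the best exponent is therefore attained at the balance point $s-\delta=\e$. Substituting the expressions above and clearing $\log 3$ collapses this to the single transcendental equation
\[
\log 2-h(\wt\e)=\tfrac52(\log 2)\,\wt\e ,
\]
whose only genuine difficulty is the entropy term, which precludes a closed form. Solving it numerically gives $\wt\e\approx 0.153$, i.e.\ $\e\approx 0.0483$ and $\sigma\approx 0.0161$.

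Since only a lower bound is needed, I would not solve this equation exactly but instead fix the round value $\e=0.048$, giving $\wt\e=\frac{2\log3}{\log2}(0.048)\approx 0.1522$. Then $\e/3=0.016$, so it remains only to check $(s-\delta)/3\ge 0.016$, equivalently $(\log 4)\wt\e+h(\wt\e)\le (s-0.048)\log 3\approx 0.6405$. As $(\log 4)\wt\e\approx 0.2110$, this is the numerical inequality $h(\wt\e)\le 0.4294$; because $h$ is increasing on $(0,1/2)$ and $\wt\e<0.153$, it suffices to verify $h(0.153)\approx 0.428<0.4294$, a finite check. This simultaneously confirms $\delta<s$, so the hypotheses of Theorem~\ref{thm:kaufman} are met and the corollary follows with $\sigma=0.016$. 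The main (indeed only) obstacle is thus purely numerical: the entropy term blocks a clean closed-form optimum, but exhibiting the single admissible value $\e=0.048$ and verifying one inequality for $h$ sidesteps it entirely.
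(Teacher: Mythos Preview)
Your proposal is correct and follows essentially the same approach as the paper: both specialize the exponent $\sigma=\min\{(s-\delta)/3,\e/3\}$ from the proof of Theorem~\ref{thm:kaufman} to $a=1/3$, $p=(1/2,1/2)$, compute $\wt\e=\tfrac{2\log 3}{\log 2}\,\e$ and $\delta=\tfrac{2(\log 2)\wt\e+h(\wt\e)}{\log 3}$, and balance $s-\delta=\e$ to obtain $\e\approx 0.0483$ and $\sigma\approx 0.016$. The only cosmetic difference is that you fix $\e=0.048$ and verify the resulting inequality, whereas the paper locates the root of $G(\e)=s-\delta(\e)-\e$ directly.
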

\begin{proof}
The non-quantitative statement is immediate from Theorem \ref{thm:kaufman}. We indicate the calculations required to obtained the value $\sigma=0.016$. Going through the proof of Theorem \ref{thm:kaufman}, notice that $\sigma= \min\left\{\frac{(s-\delta)}{3}, \frac{\e}{3}\right\}$.  We want to find $\e$ such that $s(\mu)-\delta(\e)=\e.$ In this case we have that $a=1/3, p=(1/2,1/2), \eta(c,p)=1-\cos(\pi c)$ and
$s(\mu)=\frac{\log(2)}{\log(3)}.$ So, using \eqref{eq:def-delta}, we obtain
\[
\delta({\tilde{\e}})=\frac{2\log(2)\tilde{\e}+ h(\tilde{\e})}{\log(3)}, \quad \mbox{ and } \tilde{\e}=\frac{2\log(3)\e}{\log(2)}.
\]
If we let
\begin{align*}
G(\e):=s(\mu)-\delta(\e)-\e&= \frac{\log(2)}{\log(3)}-5\e+\frac{2}{\log2}\e\log\left(\frac{2\log(3)\e}{\log(2)}\right)+\\
&\left(\frac{\log(2)-2\log(3)\e}{\log(2)\log(3)}\right)\log\left(\frac{\log(2)-2\log(3)\e}{\log(2)}\right),
\end{align*}
then  $G(\e)$ has domain $(0, \frac{\log2}{\log9})$, and $G(\e)=0$ if and only if $\e= 0.048279\ldots$, giving the claimed value of $\sigma$.
\end{proof}

\section{$L^2$ dimension of convolutions}
\label{sec:L2}

We begin by recalling the definition of $L^q$ dimensions. Let $q\in (1,+\infty)$, and set $s_n(\mu,q)= \sum_{Q\in\mathcal{D}_n} \mu(Q)^q$, with $(\mathcal{D}_n)$ the partition of $\R^d$ into dyadic intervals of length $2^{-n}$. Define
\[
\dim_q(\mu):=\liminf_{n\to+\infty} \frac{\log(s_n(\mu,q))}{(q-1)\log(2^{-n})}.
\]
The $L^2$ dimension of a measure is also known as \emph{correlation dimension}. Note also that the Frostman exponent $\dim_\infty$ can also be defined as
\[
\dim_\infty(\mu):=\liminf_{n\to +\infty}\frac{\log \max\{\mu(Q):Q\in\mathcal{D}_n\}}{\log(2^{-n})}.
\]
It is well known that the function $q\mapsto \dim_q(\mu)$ is continuous and non-increasing on $(1,+\infty]$ and that $\dim_q(\mu) \le \hdim(\mu)$ for any $q\in (1,+\infty]$, where $\hdim$ is the lower Hausdorff dimension of a measure, defined as
\[
\hdim(\mu):=\inf\{ \hdim(A):\mu(A)>0\}.
\]
We refer the reader to \cite{FLR02} for the proofs of these facts and further background on dimensions of measures.

We prove next that homogeneous self-similar measures strictly increase $L^2$ dimension of measures upon convolution, in a quantitative sense. A version of this result with far worse quantitative estimates could also be inferred from  \cite[Theorem 2.1]{Shmerkin16}.
\begin{thm} \label{thm:flattening}
Let $\mu=\mu_{a,p}^t$ be as above. Given any $\kappa>0$, there is $\sigma=\sigma(a,p,\kappa)>0$ such that the following holds: let $\nu$ be any Borel probability measure with $\dim_2(\nu) \le 1-\kappa$. Then
\[
\dim_2(\mu*\nu) > \dim_2(\nu)+\sigma.
\]
More precisely, one can take $\sigma=2\e$, where $\e=\e(a,p,\kappa)$ is such that the value of $\delta=\delta(\e,a,p)$ given in \eqref{eq:def-delta} of Proposition \ref{prop:EK0} satisfies
\begin{equation} \label{eq:def-epsilon}
\kappa - 2\e = \delta.
\end{equation}
\end{thm}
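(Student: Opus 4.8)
The plan is to pass to the Fourier side, where the hypothesis becomes a statement about $\widehat{\mu*\nu}=\widehat\mu\,\widehat\nu$, and to feed in the sparse-frequency decay of $\widehat\mu$ supplied by Proposition \ref{prop:EK0}. Write $s_n(\rho)=\sum_{Q\in\mathcal{D}_n}\rho(Q)^2$ and $E_T(\rho)=\int_{-T}^{T}|\widehat\rho(\xi)|^2\,d\xi$. The backbone is the well-known comparison between correlation dimension and $L^2$ averages of the Fourier transform, which I would record in two one-sided forms valid for \emph{arbitrary} probability measures, so that no compactness assumption on $\nu$ is needed. On one hand, integrating $|\widehat\rho|^2$ against the nonnegative Fej\'er kernel gives $s_n(\rho)\le C\,2^{-n}E_{2^{n}}(\rho)$; applied to $\mu*\nu$ this reduces the desired lower bound for $\dim_2(\mu*\nu)$ to an upper bound for $E_T(\mu*\nu)=\int_{-T}^T|\widehat\mu|^2|\widehat\nu|^2$. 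On the other hand, bounding $\mathbf{1}_{[-T,T]}$ by a dilate of the same kernel and splitting the resulting $\operatorname{sinc}^2$ weight into dyadic annuli $|x-y|\approx 2^{l}/T$ yields $E_T(\rho)\le C\,T\sum_{l\ge 0}2^{-2l}s_{n-l}(\rho)$ with $T\approx 2^n$; since $\dim_2(\nu)\le 1-\kappa<2$, the geometric series converges, and this turns the hypothesis $\dim_2(\nu)=\alpha$ into $E_T(\nu)\le C_\eta\,T^{\,1-\alpha+\eta}$ for every $\eta>0$ and all large $T$.

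With these tools the core estimate is a dichotomy on frequencies. Fix $\e>0$ (pinned down at the end) and set $T=2^n$. Split $[-T,T]$ into the good set $G=\{\xi:|\widehat\mu(\xi)|<T^{-\e}\}$ and the bad set $B=\{\xi:|\widehat\mu(\xi)|\ge T^{-\e}\}$. On $G$ I would use $|\widehat\mu|^2\le T^{-2\e}$ together with the reverse comparison above, so that $\int_G|\widehat\mu|^2|\widehat\nu|^2\le T^{-2\e}E_T(\nu)\le C_\eta T^{\,1-\alpha-2\e+\eta}$. On $B$ I would invoke Proposition \ref{prop:EK0}, which covers $B$ by at most $C_a T^{\delta}$ intervals of length $1$, with $\delta=\delta(\e,a,p)$ as in \eqref{eq:def-delta}; using only the trivial bounds $|\widehat\mu|\le 1$ and $\int_I|\widehat\nu|^2\le|I|=1$ on each unit interval $I$, this gives $\int_B|\widehat\mu|^2|\widehat\nu|^2\le C_a T^{\delta}$. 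Combining, $s_n(\mu*\nu)\le C\,2^{-n}E_T(\mu*\nu)\le C_\eta\big(T^{-\alpha-2\e+\eta}+T^{\delta-1}\big)$.

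It remains to balance the two error exponents. The first term contributes exponent $\alpha+2\e-\eta$ to $\dim_2(\mu*\nu)$, which is exactly what we want; the second contributes $1-\delta$, so I need $\delta\le 1-\alpha-2\e$. Here the hypothesis $\alpha=\dim_2(\nu)\le 1-\kappa$ enters: it gives $1-\alpha-2\e\ge\kappa-2\e$, so it suffices to choose $\e$ with $\delta(\e)=\kappa-2\e$, which is precisely the defining relation \eqref{eq:def-epsilon}. Such an $\e=\e(a,p,\kappa)>0$ exists because $\delta(\e)\to0$ as $\e\downarrow0$ while $\kappa-2\e\to\kappa>0$, with $\delta$ increasing and $\kappa-2\e$ decreasing. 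With this choice both error terms carry exponent at least $\alpha+2\e$ (up to $\eta$), so letting $n\to\infty$ and then $\eta\downarrow0$ yields $\dim_2(\mu*\nu)\ge\dim_2(\nu)+2\e$, which is the assertion with $\sigma=2\e$; the strict inequality follows from the strict slack $1-\delta>\alpha+2\e$ available when $\dim_2(\nu)<1-\kappa$, and in the boundary case from taking $\e$ marginally below the root of \eqref{eq:def-epsilon}.

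The main obstacle is not any single estimate but the fact that $\widehat\mu$ need not be small on the bad set $B$: there one can only use $|\widehat\mu|\le1$, so the whole argument hinges on $B$ being sparse enough that even the crude per-interval bound $\int_I|\widehat\nu|^2\le1$ is affordable. This is exactly where Proposition \ref{prop:EK0} does the heavy lifting, and the quantitative shape of the conclusion is dictated by the tension between the sparsity exponent $\delta$ and the dimension budget $1-\dim_2(\nu)\ge\kappa$; the equality \eqref{eq:def-epsilon} is what makes the two competing contributions of equal order. A secondary technical point is the reverse comparison $E_T(\nu)\le C_\eta T^{\,1-\dim_2(\nu)+\eta}$ for $\nu$ that need not be compactly supported, which is why I would route both comparisons through nonnegative kernels and a dyadic-annulus decomposition rather than through a clean two-sided energy identity.
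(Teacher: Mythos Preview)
Your proposal is correct and follows essentially the same route as the paper: pass to the Fourier side, split $[-T,T]$ into the good set where $|\widehat\mu|\le T^{-\e}$ and the bad set covered by $C_a T^{\delta}$ unit intervals via Proposition~\ref{prop:EK0}, and balance the two exponents by choosing $\e$ so that $\kappa-2\e=\delta$. The only difference is that the paper cites \cite[Lemma~2.5]{FNW02} for the identity $\dim_2(\eta)=1-\limsup_{T\to\infty}\tfrac{\log E_T(\eta)}{\log T}$, whereas you reconstruct the two one-sided comparisons via Fej\'er kernels; once that is in hand the estimates are line-for-line the same.
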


\begin{remark}
We need to assume that $\dim_2(\nu)\le 1-\kappa$ because if $\dim_2(\nu)$ is already $1$ or very close, it cannot grow when we convolve with $\mu$.
\end{remark}

\begin{proof}
To begin, we note that there is a value of $\e\in (0,1/2)$  satisfying \eqref{eq:def-epsilon} by standard continuity arguments (similar to the calculation in Corollary \ref{cor:Fmu}) that we omit. From  \cite[Lemma 2.5]{FNW02}, we know that $\dim_2(\eta)=1-\alpha_2(\eta)$, where
\[
\alpha_2(\eta) = \limsup_{T\to\infty} \frac{\log \int_{|\xi|\le T} |\widehat{\eta}|^2 d\xi}{\log T}.
\]
So we have to show that if $\alpha_2(\nu)\ge\kappa$, then $\alpha_2(\mu*\nu)<\alpha_2(\nu)-\sigma$. We may assume $T=2^N,$ for some $N\in\N.$

Let $\kappa_0 = \alpha_2(\nu)\ge \kappa$. By definition of $\alpha_2$, for any $\e_0>0$,
\[
\int_{ |\xi| \le 2^N}|\widehat{\nu}(\xi)|^2 d\xi \le O_{\e_0}(1) 2^{N(\kappa_0+\e_0)}.
\]
Let
\begin{align*}
E_N &= \{ \xi:  |\xi| \le 2^N, |\widehat{\mu}(\xi)| \le 2^{-\e N}\},\\
F_N &= \{ \xi:  |\xi| \le 2^N, |\widehat{\mu}(\xi)| > 2^{-\e N}\}.
\end{align*}
We know from Proposition \ref{prop:EK0} that $F_N$ can be covered by $C_a  2^{\delta N}$ intervals of length $1$
so it has measure at most $C_a 2^{\delta N}$. Here $\delta=\delta(\mu,\e)>0$ is the value given in Proposition \ref{prop:EK0}.

Using all this, we have
\begin{align*}
\int_{|\xi| \le 2^N} |\widehat{\mu*\nu}(\xi)|^2 \,d\xi &= \int_{E_N\cup F_N} |\widehat{\mu}(\xi)|^2|\widehat{\nu}(\xi)|^2\,d\xi \\
&\le \int_{E_N} 2^{-2\e N} |\widehat{\nu}(\xi)|^2 \,d\xi+ \int_{F_N} 1 \,d\xi\\
&\le O_{\e_0}(1) 2^{-2\e N} 2^{(\kappa_0+\e_0)N} + C_a 2^{\delta N}\\
&\le O_{\e_0,a}(1) 2^{(\kappa_0-2\e+\e_0)N},
\end{align*}
using that $\kappa_0\ge \kappa$ and the definition of $\e$ in the last line. Since this holds for all $\e_0>0$, it follows from the definition of $\alpha_2$ that
\[
\alpha_2(\mu*\nu) \le \kappa_0 - 2\e,
\]
which gives the claim since $\sigma=2\e$.
\end{proof}

\section{Dimension of Bernoulli convolutions}

Recall that $\nu_a^p$ denotes the (biased) Bernoulli convolution with contraction ratio $a$ and weight $p$. We next prove Theorem \ref{thm:dim-Bernoulli-intro}, which we state again:
\begin{thm}\label{dim-Bernoulli}
For every $p_0\in (0,1/2)$ there is $C=C(p_0)>0$ such that
\[
 \inf_{p\in [p_0,1-p_0]}\dim_\infty(\nu_a^p) \ge 1 - C (1-a) \log(1/(1-a)).
\]
\end{thm}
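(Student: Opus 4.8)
The plan is to deduce the $\dim_\infty$ bound from the $\dim_2$ flattening of Theorem \ref{thm:flattening} in two stages: first I would show that $\dim_2(\nu_b^p)$ is close to $1$ at the required rate for $b$ close to $1$, and then upgrade the $\dim_2$ bound for $\nu_{a^2}^p$ to a $\dim_\infty$ bound for $\nu_a^p$ by a convolution argument. The reason the upgrade is not trivial is that in general $\dim_\infty \le \dim_2$, so a direct Cauchy--Schwarz passage from $L^2$ to $L^\infty$ loses a factor of two; the self-similar structure is what lets me avoid this loss.

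For the upgrade I would split the Bernoulli convolution into two independent blocks of comparable size. Writing $\nu_a^p$ as the distribution of $\sum_n X_n a^n$ and separating the even- and odd-indexed terms yields $\nu_a^p=\alpha*\beta$, where $\alpha,\beta$ are affine copies of $\nu_{a^2}^p$; in particular $\dim_2(\alpha)=\dim_2(\beta)=\dim_2(\nu_{a^2}^p)$. Having two factors converts an $L^2$ bound into an $L^\infty$ bound with no loss: choosing a Schwartz majorant $\psi\ge \1_{[-1,1]}$ with $\widehat\psi$ concentrated on $[-1,1]$ and testing $\alpha*\beta$ against $\psi((\cdot-x)/r)$ gives
\[
\sup_x (\alpha*\beta)(B(x,r)) \lesssim r\int |\widehat\alpha(\xi)||\widehat\beta(\xi)|\,|\widehat\psi(r\xi)|\,d\xi \lesssim r \Big(\int_{|\xi|\lesssim 1/r}|\widehat\alpha|^2\Big)^{1/2}\Big(\int_{|\xi|\lesssim 1/r}|\widehat\beta|^2\Big)^{1/2},
\]
by Cauchy--Schwarz. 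If $\dim_2(\nu_{a^2}^p)\ge 1-\kappa$, then by the characterization $\dim_2=1-\alpha_2$ used in the proof of Theorem \ref{thm:flattening} each factor contributes $(1/r)^{\kappa/2}$, so for every $\e_0>0$ one has $\sup_x(\alpha*\beta)(B(x,r)) \lesssim_{\e_0} r^{1-\kappa-\e_0}$, whence $\dim_\infty(\nu_a^p)\ge \dim_2(\nu_{a^2}^p)$. Since $1-a^2\asymp 1-a$, it then suffices to prove $\dim_2(\nu_b^p)\ge 1-C'(1-b)\log(1/(1-b))$ for $b$ close to $1$, uniformly in $p\in[p_0,1-p_0]$.

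For the $\dim_2$ bound I would iterate Theorem \ref{thm:flattening}. Fix a contraction ratio $c=c(p_0)\in(0,1/2]$ and decompose $\nu_b^p$ by residues modulo $n$, chosen so that $b^n\asymp c$; this writes $\nu_b^p$ as a convolution of $n$ affine copies of $\nu_{b^n}^p$, whose contraction ratio is comparable to $c$. Running the flattening along the partial convolutions, using each such copy as the flattener $\mu$, Theorem \ref{thm:flattening} (through Proposition \ref{prop:EK0} applied at contraction $\asymp c$) shows that as long as the running $\dim_2$ is at most $1-\kappa$ each step increases it by $2\e$, where $\e=\e(c,p,\kappa)$ solves \eqref{eq:def-epsilon}; once the running value exceeds $1-\kappa$ a step cannot decrease it. Since $\nu_{b^n}^p$ satisfies the open set condition, the starting value $\dim_2(\nu_{b^n}^p)\ge d_0(p_0)>0$ is bounded below uniformly in $p\in[p_0,1-p_0]$. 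After $n$ steps we recover $\nu_b^p$, so $\dim_2(\nu_b^p)\ge \min\{1-\kappa,\ d_0+2\e n\}$.

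It remains to choose $\e$ and verify the rate. Taking $\e\asymp 1-b$ makes $\tilde\e\asymp 1-b$ small, so in \eqref{eq:def-delta} the entropy term dominates, $h(\tilde\e)\sim \tilde\e\log(1/\tilde\e)\asymp (1-b)\log(1/(1-b))$, whence $\kappa=2\e+\delta(\e)\asymp (1-b)\log(1/(1-b))$ with constants depending only on $c$ and $p_0$; this is exactly where the logarithmic factor in the statement originates. On the other hand $n\asymp \log(1/c)/(1-b)$, so $2\e n\asymp \log(1/c)$, and fixing the constant in $\e$ (equivalently $c$) large enough in terms of $d_0(p_0)$ forces $2\e n\ge 1-d_0$, so the iteration does reach $1-\kappa$ and $\dim_2(\nu_b^p)\ge 1-C'(1-b)\log(1/(1-b))$. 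Uniformity over $p\in[p_0,1-p_0]$ holds because $\eta(c/(c+1),p)$, the starting dimension $d_0$, and all implied constants are continuous in $p$ on the compact interval. The main obstacle is precisely this bookkeeping: one must keep the flattener scale $c$ fixed while sending $b\to 1$, balance the $n\asymp 1/(1-b)$ available flattening steps against the per-step gain $2\e$, and isolate $h(\tilde\e)$ as the source of the logarithmic loss, all with constants uniform in $p$.
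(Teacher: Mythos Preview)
Your proposal is correct and follows essentially the same route as the paper: decompose $\nu_b^p$ as an $n$-fold convolution of rescaled copies of $\nu_{b^n}^p$ with $b^n$ at a fixed scale, iterate Theorem \ref{thm:flattening} along the partial convolutions to push $\dim_2$ up to $1-\kappa$ with $\kappa\asymp (1-b)\log(1/(1-b))$ (the log coming from the entropy term $h(\tilde\e)$ in \eqref{eq:def-delta}), and then upgrade to $\dim_\infty$ by splitting $\nu_a^p$ into two copies of $\nu_{a^2}^p$. The only cosmetic difference is that the paper carries out the upgrade via the spatial Young-type inequality of Lemma \ref{lem:young}, whereas you do the equivalent Fourier-side Cauchy--Schwarz; and the paper phrases the parameter choice as solving for $\kappa$ with $\sigma(\kappa)=1/(N-1)$ rather than choosing $\e\asymp 1-b$ directly, but these are the same computation.
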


For the proof we need the following relation between $L^2$ and $L^\infty$ dimensions. Although it is a simple consequence of Young's inequality, we have not been able to find it in the literature.
\begin{lemma} \label{lem:young}
Let $\mu,\nu$ be probability measures on $\R$. Then
\[
\dim_\infty(\mu*\nu) \ge \frac{\dim_2(\mu)+\dim_2(\nu)}{2}.
\]
\end{lemma}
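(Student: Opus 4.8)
The plan is to relate all three $L^2$ and $L^\infty$ dimensions to decay rates of the relevant energy integrals and then apply Young's convolution inequality in the exponent $L^4$ form. First I would recall the Fourier-analytic characterization of $L^2$ dimension used already in the proof of Theorem \ref{thm:flattening}, namely $\dim_2(\eta)=1-\alpha_2(\eta)$ with
\[
\alpha_2(\eta) = \limsup_{T\to\infty} \frac{\log \int_{|\xi|\le T} |\widehat{\eta}(\xi)|^2\, d\xi}{\log T}.
\]
This says, roughly, that $\widehat{\eta}\in L^2(B_T)$ grows like $T^{\alpha_2(\eta)/2}$, i.e. that $\widehat{\eta}$ sits in a weighted $L^2$ space. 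The key observation is that $\dim_\infty$ controls the sup of masses of small balls, so the natural quantity to estimate is $\sup_x (\mu*\nu)(B(x,r))$, and I want to bound this by an $L^\infty$-type norm of a density whose Fourier transform is $\widehat{\mu}\,\widehat{\nu}$.

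The main step is the inequality itself. I would first reduce $\dim_\infty(\mu*\nu)$ to the behaviour of a smoothed version of $\mu*\nu$: fix a smooth bump $\psi_r$ at scale $r$ and estimate $\sup_x (\mu*\nu*\psi_r)(x)$, since $(\mu*\nu)(B(x,r))$ is comparable to $\mu*\nu*\psi_r$ evaluated near $x$ for a suitable $\psi_r$. Writing $\mu*\nu*\psi_r$ as a convolution, its $L^\infty$ norm is controlled by Young's inequality $\|f*g\|_\infty \le \|f\|_2\|g\|_2$, where I would split $\mu*\nu*\psi_r = (\mu*\psi_r^{1/2})*(\nu*\psi_r^{1/2})$ with each factor a genuine $L^2$ function at scale $r$. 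By Plancherel, $\|\mu*\psi_r^{1/2}\|_2^2 = \int |\widehat{\mu}(\xi)|^2 |\widehat{\psi_r^{1/2}}(\xi)|^2\, d\xi$, and since $\widehat{\psi_r^{1/2}}$ is essentially supported on $|\xi|\lesssim 1/r$, this integral is comparable to $\int_{|\xi|\le 1/r}|\widehat{\mu}(\xi)|^2\,d\xi \approx (1/r)^{\alpha_2(\mu)}$, and similarly for $\nu$. Combining,
\[
\sup_x (\mu*\nu)(B(x,r)) \lesssim \|\mu*\psi_r^{1/2}\|_2\,\|\nu*\psi_r^{1/2}\|_2 \lesssim (1/r)^{(\alpha_2(\mu)+\alpha_2(\nu))/2}.
\]
Taking logarithms, dividing by $\log r$, and letting $r\downarrow 0$ then yields
\[
\dim_\infty(\mu*\nu) \ge 1 - \tfrac{1}{2}\bigl(\alpha_2(\mu)+\alpha_2(\nu)\bigr) = \tfrac{1}{2}\bigl(\dim_2(\mu)+\dim_2(\nu)\bigr),
\]
which is the claim.

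The hard part will be making the smoothing rigorous while keeping the exponents sharp. Concretely, I must choose $\psi_r$ so that $\mu*\psi_r^{1/2}$ and $\nu*\psi_r^{1/2}$ are bona fide $L^2$ functions with $\widehat{\psi_r^{1/2}}$ both nonnegative (so the product $\widehat{\mu}\,\widehat{\nu}\,|\widehat{\psi_r^{1/2}}|^2$ is the Fourier transform of $\mu*\nu*\psi_r$, a nonnegative function majorizing the ball measure) and decaying fast enough outside $|\xi|\lesssim 1/r$ that the tail contribution to each Plancherel integral is negligible. The cleanest route is to take $\psi_r^{1/2}$ with $\widehat{\psi_r^{1/2}}$ a fixed Schwartz bump rescaled to width $1/r$; then the tail decay is automatic and the only care needed is that $\alpha_2$ is defined via a $\limsup$, so the bound $\int_{|\xi|\le T}|\widehat{\mu}|^2 \lesssim T^{\alpha_2(\mu)+\e_0}$ holds for every $\e_0>0$ and all large $T$, which forces an extra $\e_0$ in the exponent that I then send to zero at the end. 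I would also need to verify that $(\mu*\nu)(B(x,r))$ is genuinely dominated by a constant times $\mu*\nu*\psi_r(x')$ for some $x'$ near $x$, which is routine provided $\psi_r\gtrsim 1$ on $B(0,r)$.
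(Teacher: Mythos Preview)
Your approach is sound but takes a different route from the paper's. Both arguments rest on Young's inequality $\|f*g\|_\infty \le \|f\|_2\|g\|_2$, but you set it up via Fourier analysis and Plancherel (using the characterization $\dim_2 = 1-\alpha_2$ from the proof of Theorem~\ref{thm:flattening}), whereas the paper works entirely on the spatial side with dyadic discretizations: it replaces $\mu,\nu$ by the piecewise-constant densities $\mu_n,\nu_n$ on $\mathcal{D}_n$, notes $\|\mu_n\|_2^2 = 2^n s_n(\mu,2)$, applies Young to get $\|\mu_n*\nu_n\|_\infty \le 2^n s_n(\mu,2)^{1/2}s_n(\nu,2)^{1/2}$, and then checks directly that $(\mu*\nu)(Q)\le \mu_n*\nu_n(5Q)$ for $Q\in\mathcal{D}_n$. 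The paper's route is more elementary---no Plancherel, no Schwartz smoothing, no tail estimates---and reads off the dyadic definitions of $\dim_2$ and $\dim_\infty$ without passing through $\alpha_2$. Your route is natural given the Fourier machinery already in play, but watch one bookkeeping slip: your displayed bound $\sup_x(\mu*\nu)(B(x,r)) \lesssim (1/r)^{(\alpha_2(\mu)+\alpha_2(\nu))/2}$ is missing a factor of $r$ (coming from converting ball mass to the $L^\infty$ norm of the smoothed density, or equivalently from the normalization of $\psi_r^{1/2}$); without it the next line $\dim_\infty(\mu*\nu)\ge 1-\tfrac12(\alpha_2(\mu)+\alpha_2(\nu))$ does not follow, even though that final inequality is the correct one.
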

\begin{proof}
Let $\mu_n$, $\nu_n$ denote the absolutely continuous measures with locally constant density equal to $2^n\mu(Q)$, $2^n\nu(Q)$ on each interval $Q\in\mathcal{D}_n$, respectively. We also denote by $\mu_n,\nu_n$ the respective densities. Note that $\mu_n(Q)=\mu(Q)$, and
\[
\|\mu_n\|_2^2 = 2^{n} s_n(\mu,2)
\]
and likewise for $\nu_n$. By Young's inequality,
\begin{equation} \label{eq:young}
\|\mu_n*\nu_n\|_\infty \le 2^{n} s_n(\mu,2)^{1/2} s_n(\nu,2)^{1/2}.
\end{equation}
On the other hand, for any $Q\in\mathcal{D}_n$, we have
\begin{align*}
\mu*\nu(Q) &\le  \sum_{ J,J'\in\mathcal{D}_n: Q\cap J+J'\neq\varnothing} \mu(J)\nu(J') \\
&= \sum_{ J,J'\in\mathcal{D}_n: Q\cap J+J'\neq\varnothing} \mu_n(J)\nu_n(J') \\
&\le \mu_n*\nu_n(5Q).
\end{align*}
where $5Q$ is the interval with the same center as $Q$ and $5$ times the length (this follows since $J+J'\subset 5Q$ whenever $Q\cap J+J'\neq\varnothing$). Combining this with \eqref{eq:young}, we conclude that
\[
\sup\{\mu*\nu(Q):Q\in\mathcal{D}_n\} \le 5 s_n(\mu,2)^{1/2} s_n(\nu,2)^{1/2}.
\]
Taking logarithms, dividing by $\log (2^{-n})$, and taking $\liminf$ on both sides, we get the claim.
\end{proof}

\begin{proof}[Proof of Theorem \ref{dim-Bernoulli}]
Fix $a\in (0,1)$ close to $1$. We define
\[
N=N_a:=\min \{n\in\N\colon a^n<1/2\}.
\]
Then,
\[
\frac{a}{2}\le a^N<\frac{1}{2}.
\]
In particular, assuming as we may that $a>1/2$, we see that $a^N\in (1/4,1/2)$.

Fix $\kappa\in (0,1)$, and suppose that $\dim_2(\nu_a^p)\le 1-\kappa$. Let us write $S_a(x)=ax$ for the map that scales by $a$, and recall that
\begin{equation} \label{eq:convolution-structure}
\nu_a^p = \nu_{a^N}^p * S_a \nu_{a^N}^p * \cdots * S_{a^{N-1}} \nu_{a^N}^p.
\end{equation}
We know that $\dim_2(\nu_{a^N}^p)\ge 0$. In fact, since the associated IFS satisfies the open set condition,
\begin{equation} \label{eq:corr-dim-OSC}
\dim_2(\nu_{a^N}^p) = \frac{\log(p^2+(1-p)^2)}{\log(a^N)}.
\end{equation}
Now, using that $\dim_2(\nu_a^p)\le 1-\kappa$ and \eqref{eq:convolution-structure},
we get $\dim_2(\nu_{a^N}^p)\le 1-\kappa$. By Theorem \ref{thm:flattening}, there is $\sigma=\sigma_{a,p}(\kappa)>0$ such that
\[
\dim_2(\nu_{a^N}^p *S_a \nu_{a^N}^p)\ge \sigma.
\]
Proceeding inductively according to \eqref{eq:convolution-structure}, after $N-1$ steps we obtain that if $\dim_2(\nu_a^p)\le 1-\kappa$, then
\[
\dim_2(\nu_a^p) \ge (N-1)\sigma.
\]
It follows that if $\kappa$ is such that $\sigma=\sigma_{a,p}(\kappa)=1/(N-1)$, then
\[
\dim_2(\nu_a^p)\ge 1-\kappa.
\]
Thus, it remains to estimate such $\kappa$. We denote by $C_i$ positive constants that only depend on $p_0$. By \eqref{eq:def-epsilon}, we have that $\kappa = \delta+\sigma$, where $\delta=\delta(\sigma/2)$ is given by \eqref{eq:def-delta}. Note that $\tilde{\e}=C(a^N,p)\sigma$, where $C>0$ depends continuously on $a^N$ and $p$. Since $a^N\in [1/4,1/2]$ and $p\in [p_0,1-p_0]$, a calculation using \eqref{eq:def-delta} shows that there is a constant $C_1$ such that $\delta\le C_1 \sigma\log(1/\sigma)$ provided $\sigma$ is small enough (which we may assume).  We deduce that
\begin{equation} \label{eq:final-lower-bound}
\dim_2(\nu_a^p)\ge 1-\kappa \ge 1-\sigma-C_1\sigma\log(1/\sigma) \ge 1- C_2\sigma\log(1/\sigma)
\end{equation}
if $\sigma$ is small enough. On the other hand, since $a^{1/\sigma}=a^{N-1}<a^{-1}/2<2/3$ (say), we have that $\sigma \le \log(1/a)/\log(3/2)$. Finally, using that $\log(1/a) \le 2(1-a)$ for $1-a$ small, we deduce that
\[
\sigma \le C_4 (1-a).
\]
Together with \eqref{eq:final-lower-bound}, this yields
\[
\dim_2(\nu_a^p) \ge  1 - C_5 (1-a) \log(1/(1-a)).
\]
Since we have the decomposition
\[
 \nu_a^p = \nu_{a^2}^p * S_a\nu_{a^2}^p,
\]
and scalings do not change $L^2$ dimension, we can appeal to Lemma \ref{lem:young} to conclude that
\[
\dim_\infty(\nu_a^p) \ge 1 - C_5 (1-a^2) \log(1/(1-a^2)) \ge 1- C_6(1-a)\log(1/(1-a)).
\]
\end{proof}

When $p_0<1/2$, the estimate \eqref{eq:corr-dim-OSC} is away from $1$ so it does not help in improving the estimate given by the theorem. However, for unbiased Bernoulli convolutions, already \eqref{eq:corr-dim-OSC} is very close to $1$ so that we are able to obtain an improved lower bound in this case:
\begin{cor} \label{cor:unbiased}
There is an absolute constant $C>0$ such that
\[
\dim_\infty(\nu_a) \ge 1 - C(1-a)^2 \log(1/(1-a)).
\]
\end{cor}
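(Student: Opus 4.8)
The plan is to run exactly the same inductive flattening scheme used in the proof of Theorem \ref{dim-Bernoulli}, but to seed the induction with the \emph{sharp} open-set-condition value \eqref{eq:corr-dim-OSC} instead of the trivial bound $\dim_2(\nu_{a^N})\ge 0$. The gain comes from a single observation: when $p=1/2$ the base block $\nu_{a^N}$ is already extremely close to full dimension. Indeed, from \eqref{eq:corr-dim-OSC} with $p=1/2$ we have $\dim_2(\nu_{a^N})=\log 2/\log(1/a^N)$, and since $a^N\in(a/2,1/2)$ one gets $\log(1/a^N)\le \log 2+\log(1/a)$, so that
\[
\Delta_0 := 1-\dim_2(\nu_{a^N}) \le \frac{\log(1/a)}{\log 2} \le \frac{2(1-a)}{\log 2},
\]
i.e. the initial dimension deficit is only $O(1-a)$.

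Next I would repeat the induction essentially verbatim. Assume $\dim_2(\nu_a)\le 1-\kappa$; then every partial convolution appearing in \eqref{eq:convolution-structure} has $\dim_2\le 1-\kappa$ (since $\dim_2$ does not decrease under convolution), so Theorem \ref{thm:flattening} applies at each of the $N-1$ convolution steps with a uniform increment $\sigma=\sigma_{a^N}(\kappa)$. Starting now from $\dim_2(\nu_{a^N})=1-\Delta_0$ rather than from $0$, after the $N-1$ steps I obtain
\[
\dim_2(\nu_a) \ge (1-\Delta_0)+(N-1)\sigma .
\]
Consequently $\dim_2(\nu_a)\ge 1-\kappa$ will hold for \emph{any} $\kappa$ for which $(N-1)\sigma+\kappa\ge \Delta_0$, exactly as in the balancing step of Theorem \ref{dim-Bernoulli}.

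The heart of the improvement is the optimization of $\kappa$, which I would carry out using the relations $\kappa=\delta+\sigma$ from \eqref{eq:def-epsilon} and $\delta\le C\sigma\log(1/\sigma)$ from \eqref{eq:def-delta} (valid for small $\sigma$, uniformly in $a^N\in[1/4,1/2]$), so that $\kappa\le C\sigma\log(1/\sigma)$. Since $N-1\approx \log 2/\log(1/a)$ is of order $1/(1-a)$ while $\Delta_0$ is of order $1-a$, the constraint $(N-1)\sigma\ge \Delta_0$ is met by the choice
\[
\sigma \approx \frac{\Delta_0}{N-1} = O\big((1-a)^2\big),
\]
which is the crucial point: the large number of convolution steps lets one take a per-step increment quadratically small in $1-a$. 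This feeds back into the threshold as $\kappa\le C\sigma\log(1/\sigma)=O\big((1-a)^2\log(1/(1-a))\big)$, yielding
\[
\dim_2(\nu_a)\ge 1-C(1-a)^2\log(1/(1-a)).
\]

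Finally I would pass from $\dim_2$ to $\dim_\infty$ exactly as at the end of Theorem \ref{dim-Bernoulli}: writing $\nu_a=\nu_{a^2}*S_a\nu_{a^2}$ and using that scalings preserve $\dim_2$, Lemma \ref{lem:young} gives $\dim_\infty(\nu_a)\ge \dim_2(\nu_{a^2})$, and substituting $a^2$ for $a$ in the bound above together with $1-a^2\le 2(1-a)$ absorbs the change into the constant. The only genuinely delicate point is the balancing in the previous paragraph: verifying that the quadratically small increment $\sigma=O((1-a)^2)$ is simultaneously small enough for the asymptotic estimate $\delta\le C\sigma\log(1/\sigma)$ to be valid (which forces $a$ close to $1$) and large enough that $(N-1)\sigma$ clears the deficit $\Delta_0$; everything else is a direct re-run of the biased argument with a better base case.
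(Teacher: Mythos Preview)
Your proposal is correct and follows essentially the same route as the paper: seed the induction with the open-set-condition value \eqref{eq:corr-dim-OSC} (giving a deficit $\Delta_0=O(1-a)$), re-run the flattening induction of Theorem \ref{dim-Bernoulli} to obtain $\sigma=\Delta_0/(N-1)=O((1-a)^2)$, translate this into $\kappa\le C\sigma\log(1/\sigma)$ via \eqref{eq:def-epsilon} and \eqref{eq:def-delta}, and then pass to $\dim_\infty$ with Lemma \ref{lem:young}. The only cosmetic difference is that the paper balances at $(N-1)\sigma=\Delta_0$ while you state the (equivalent, slightly weaker) condition $(N-1)\sigma+\kappa\ge\Delta_0$; either version yields the same bound.
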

\begin{proof}
Again, fix $a\in(0,1)$ close to $1$ and let $N=\inf\{n\in\N\colon a^n\le 1/2\}$, so that $a^N > a/2$.

According to \eqref{eq:corr-dim-OSC},
\begin{align*}
\dim_2(\nu_{a^N}) &= \frac{\log(1/2)}{\log a^N} \\
&\ge \frac{\log (1/2)}{\log( a/2)} = 1 -  \frac{\log(1/ a)}{\log(2/ a)}.
\end{align*}

Proceeding as in the proof of the theorem \ref{dim-Bernoulli}, we obtain that if $\dim_2(\nu_a)\le 1- \kappa$ then there exists
$\sigma=\sigma(\kappa, a)>0$ such that
\begin{align*}
\dim_2(\nu_a)&\ge \dim_2(\nu_{ a^N})+ (N-1)\sigma\\
&\ge 1-\frac{\log(1/ a)}{\log(2/ a)}+ (N-1)\sigma.
\end{align*}
Now if $\kappa$ is such that $\sigma=\frac{\log(1/ a)}{\log(2/ a)}\frac{1}{N-1},$ then
\[
\dim_2(\nu_a)\ge 1-\kappa.
\]
Then we want to estimate such $\kappa$. Proceeding as in the proof of the above theorem, we get
\[
\dim_2(\nu_a)\ge 1-\kappa\ge 1-c_1\sigma\log(1/\sigma).
\]
On the other hand, using that $1/4< a^N<1/2$ and that $\log(1/ a) \le 2(1- a)$ for $1- a$ small, we obtain
 $\frac{1}{N-1}\le c_2(1- a)$ and then
 \[
 \sigma\le c_3(1- a)^2.
 \]
Thus
 \[
\dim_2(\nu_a) \ge 1 - c_4(1-a)^2 \log(1/(1-a)).
\]
Invoking Lemma \ref{lem:young} as in the proof of Theorem \ref{dim-Bernoulli} finishes the proof.
\end{proof}


\begin{thebibliography}{10}

\bibitem{BourgainDyatlov17}
Jean Bourgain and Semyon Dyatlov.
\newblock Fourier dimension and spectral gaps for hyperbolic surfaces.
\newblock {\em Geom. Funct. Anal.}, 27(4):744--771, 2017.

\bibitem{DavenportEtAl63}
H.~Davenport, P.~Erd\H{o}s, and W.~J. LeVeque.
\newblock On {W}eyl's criterion for uniform distribution.
\newblock {\em Michigan Math. J.}, 10:311--314, 1963.

\bibitem{Erdos40}
Paul Erd\H{o}s.
\newblock On the smoothness properties of a family of {B}ernoulli convolutions.
\newblock {\em Amer. J. Math.}, 62:180--186, 1940.

\bibitem{FLR02}
Ai-Hua Fan, Ka-Sing Lau, and Hui Rao.
\newblock Relationships between different dimensions of a measure.
\newblock {\em Monatsh. Math.}, 135(3):191--201, 2002.

\bibitem{FengLau09}
De-Jun Feng and Ka-Sing Lau.
\newblock Multifractal formalism for self-similar measures with weak separation
  condition.
\newblock {\em J. Math. Pures Appl. (9)}, 92(4):407--428, 2009.

\bibitem{FNW02}
De-Jun Feng, Nhu~T. Nguyen, and Tonghui Wang.
\newblock Convolutions of equicontractive self-similar measures on the line.
\newblock {\em Illinois J. Math.}, 46(4):1339--1351, 2002.

\bibitem{Hochman14}
Michael Hochman.
\newblock On self-similar sets with overlaps and inverse theorems for entropy.
\newblock {\em Ann. of Math. (2)}, 180(2):773--822, 2014.

\bibitem{JordanSahlsten16}
Thomas Jordan and Tuomas Sahlsten.
\newblock Fourier transforms of {G}ibbs measures for the {G}auss map.
\newblock {\em Math. Ann.}, 364(3-4):983--1023, 2016.

\bibitem{Kahane71}
J.-P. Kahane.
\newblock Sur la distribution de certaines s\'eries al\'eatoires.
\newblock In {\em Colloque de {T}h\'eorie des {N}ombres ({U}niv. {B}ordeaux,
  {B}ordeaux, 1969)}, pages 119--122. Bull. Soc. Math. France, M\'em. No. 25,
  Soc. Math. France Paris, 1971.

\bibitem{Kaufman84}
Robert Kaufman.
\newblock On {B}ernoulli convolutions.
\newblock In {\em Conference in modern analysis and probability ({N}ew {H}aven,
  {C}onn., 1982)}, volume~26 of {\em Contemp. Math.}, pages 217--222. Amer.
  Math. Soc., Providence, RI, 1984.

\bibitem{Mockenhaupt00}
G.~Mockenhaupt.
\newblock Salem sets and restriction properties of {F}ourier transforms.
\newblock {\em Geom. Funct. Anal.}, 10(6):1579--1587, 2000.

\bibitem{Shmerkin14}
Pablo Shmerkin.
\newblock On the {E}xceptional {S}et for {A}bsolute {C}ontinuity of {B}ernoulli
  {C}onvolutions.
\newblock {\em Geom. Funct. Anal.}, 24(3):946--958, 2014.

\bibitem{Shmerkin16}
Pablo Shmerkin.
\newblock On {F}urstenberg's intersection conjecture, self-similar measures,
  and the ${L}^q$ norms of convolutions.
\newblock Preprint, arXiv:1609.07802, 2016.

\bibitem{Tsujii15}
Masato Tsujii.
\newblock On the {F}ourier transforms of self-similar measures.
\newblock {\em Dyn. Syst.}, 30(4):468--484, 2015.

\bibitem{Varju16}
P\'{e}ter Varj\'{u}.
\newblock Absolute continuity of {B}ernoulli convolutions for algebraic
  parameters.
\newblock Preprint, arXiv:1602.00261, 2016.

\end{thebibliography}

\end{document}